\crefname{hypothesis}{Hypothesis}{Hypotheses}
\crefname{fact}{Fact}{Facts}
\newcommand{\Prob}{\mathbb{P}}
\newcommand{\R}{\mathbb{R}}
\newcommand{\F}{\mathscr{F}}
\newcommand{\E}{\mathbb{E}}
\newcommand{\Var}{\mathrm{Var}}
\newcommand{\ud}{\,\mathrm{d}}
\newcommand{\QV}[2]{\left\langle #1, #2\right\rangle}
\newcommand{\transpose}{^{\operatorname{T}}}
\newcommand{\diag}{\mathrm{diag}}
\newcommand{\mc}[1]{\mathcal{#1}}
\DeclareMathOperator*{\argsup}{arg\,sup}
\title{Strategic Inference in Stackelberg Games: Optimal Control for Revealing Adversary Intent\thanks{To be submitted.
\funding{This work was partially supported by the ONR grant under \#N00014-24-1-2432. R.H. was also partially supported by a grant from the Simons Foundation (MP-TSM-00002783). X.Y. was also partially supported by the NSF grant DMS-2109116.}}}
\author{Ruimeng Hu\thanks{Department of Mathematics, and Department of Statistics and Applied Probability, University of California, Santa Barbara, CA 93106 
  (\email{rhu@ucsb.edu}).}
\and Daniel Ralston\thanks{Department of Mathematics, University of California, Santa Barbara, CA 93106
  (\email{danielralston@ucsb.edu}, \email{xuyang@math.ucsb.edu}).}
\and Xu Yang\footnotemark[3]
\and Haosheng Zhou\thanks{Department of Statistics and Applied Probability, University of California, Santa Barbara, CA 93106 
(\email{hzhou593@ucsb.edu}).}
}
\begin{document}

\maketitle

\begin{abstract}
We study a continuous-time stochastic Stackelberg game in which a leader seeks to accomplish a primary objective while inferring a hidden parameter of a rational follower. The follower solves an entropy-regularized tracking problem and responds to the leader’s trajectory with a randomized policy. Anticipating this response, the leader designs informative controls to maximize the estimation efficiency for the follower’s latent intent, through maximum likelihood estimation. Unlike prior work on discrete-time or finite-candidate inverse learning, our framework enables continuous parameter inference without prior assumptions and endogenizes the information source through the follower’s strategic feedback. We derive semi-explicit solutions, prove well-posedness, and develop recurrent neural network algorithms to approximate the leader’s path-dependent control. Numerical experiments demonstrate how the leader balances task performance and information gain, highlighting the practical value of our approach for adversarial strategic inference.
\end{abstract}

\begin{keywords}
Inverse learning, Stackelberg games, experimental design, maximum likelihood estimation, path-dependent control
\end{keywords}

\begin{MSCcodes}
49N45, 91A65, 62K05
\end{MSCcodes}

\section{Introduction}
Inferring an agent's intentions from limited interaction is a long-standing problem spanning statistics, control theory, and machine learning~\cite{bergemann_information_2019}. A central topic in this area is \emph{decision-making to learn}, i.e., designing models, controls, or experiments so that the resulting data is maximally informative. This idea appears prominently in optimal experimental design and its Bayesian variants, which select actions or system parameters to optimize information-based criteria~\cite{pukelsheim_optimal_2006,rainforth_modern_2023}. Related principles also arise in dual control, where actions must balance the competing goals of regulating system performance and reducing uncertainty about latent parameters~\cite{feldbaum1960dual, mesbah2018stochastic}. These decision-driven inference problems have broad relevance, with applications in cybersecurity~\cite{chen2019adversarial, liuIncentivebasedmodeling2005, shinde2021cyber}, defense planning, and autonomous systems~\cite{kim2025deceptive, ward2023active}, where understanding adversarial or partially observable behavior is essential for robust strategic responses.

In this paper, we propose a continuous-time Stackelberg game between two adversarial agents, a leader and a follower, as a framework for strategic inference. The interaction unfolds over a finite time horizon $[0,T]$, with each agent’s state evolving according to controlled stochastic differential equations (SDEs). The follower aims to track the leader’s trajectory up to an unknown dilation parameter \(M \in \mathbb{R}\), which represents its latent intent or planning preference. The leader, aware of the follower’s response model, designs its control to simultaneously accomplish a tracking task and gain information about \(M\). This formulation captures the key idea of \textit{acting to learn} in an adversarial context, where the leader’s trajectory influences the informativeness of the follower’s response. Thus, the strategic structure of the game reflects the leader’s dual objective: to control its own state effectively while shaping the follower’s observable behavior to reveal useful information. 

The interaction proceeds in three steps:
\begin{itemize}[leftmargin=4em]
\item[Step 1:] \textbf{Leader's move.} The leader selects a control policy and generates its state trajectory.
\item[Step 2:] \textbf{Follower's response.} The follower reacts by selecting a randomized control that optimizes its own entropy-regularized cost.
\item[Step 3:] \textbf{Leader's inference.} The leader observes the follower's resulting trajectory and estimates the hidden parameter $M$.
\end{itemize}
By anticipating the follower’s rational response, the leader deliberately selects informative actions, observes the resulting feedback, and extracts information from the follower’s trajectory to estimate the hidden intent.

\smallskip
\noindent\textbf{Related Literature.}
Stackelberg games have a rich theoretical background. Early studies focused on discrete-time stochastic dynamics and continuous-time deterministic dynamics~\cite{basar_dynamic_1998,jb_cruz_survey_1975, simaan_stackelberg_1973}. Continuous-time stochastic Stackelberg games were first proposed in~\cite{bagchi_stackelberg_1981} and formalized in~\cite{yong_leader-follower_2002}, which triggered a growing body of research. Subsequent work addressed both open-loop solutions~\cite{oksendal_stochastic_2013, yong_leader-follower_2002} and closed-loop solutions~\cite{bensoussan2015maximum,hernandez_closed-loop_2024, li2023linear}, including mean-field extensions~\cite{li2024closed} and jump-diffusion settings~\cite{moon_linear-quadratic_2021}. Most of these models assume the leader and follower control a shared state process. By contrast, we consider two agents operating on distinct controlled dynamics that unfold sequentially.

Optimal experimental design and dual control also offer comparisons to our model. Optimal experimental design aims to reduce uncertainty while satisfying task objectives, typically by optimizing an information criterion~\cite{bergemann_information_2019, silvey_optimal_2013}. Dual control, originating from~\cite{feldbaum1960dual}, addresses the joint challenge of performance optimization and information acquisition~\cite{wittenmark1995adaptive}. In both cases, the uncertainty arises from exogenous noise, and the inference task is embedded within a non-strategic environment~\cite{bock_parameter_2013, mesbah2018stochastic, wilson_trajectory_2014}. Our model differs in that the leader's control shapes the information-generating behavior of a rational opponent, making the inference task endogenous to the interaction.

Recent advances in inverse reinforcement learning (IRL)~\cite{adams_survey_2022,arora_survey_2020,ng_algorithms_2000} and active exploration~\cite{lopes_active_2009} extend these ideas to settings where agent preferences are inferred from observed behavior. In contrast to previous work focusing on discrete-time models or limited observability~\cite{chen2019adversarial,liuIncentivebasedmodeling2005,shinde2021cyber,ward2023active}, our model analyzes continuous-time stochastic dynamics, where the system evolution is fully known up to a latent parameter. Furthermore, our inference is performed retrospectively after observing the full trajectory, rather than incrementally during interaction.

\smallskip
\noindent\textbf{Main Contributions.}
This paper makes two main contributions on modeling and computation.  
On the modeling side, we introduce a continuous-time stochastic Stackelberg game framework for strategic inference of an unknown parameter in an adversarial environment. Unlike discrete-time inverse learning formulations (e.g., \cite{ward2023active}), which rely on a finite candidate set for the latent parameter, our model permits unconstrained continuous inference without requiring prior distributional assumptions, thereby offering a more realistic and flexible characterization of adversarial behavior. In contrast to classical experimental design for diffusion processes, which studies inference in a fixed environment~\cite{hooker2015control}, our setting endogenizes the information source through the follower’s optimal response, explicitly incorporating strategic feedback. Moreover, we obtain semi-explicit solutions and establish global well-posedness results, providing theoretical guarantees for the existence and structure of optimal strategies.

On the numerical side, we develop learning-based methods tailored to the path-dependent nature of the leader’s control problem. Specifically, we approximate optimal controls using recurrent neural networks and benchmark their performance against linear-quadratic solutions. Numerical experiments highlight the trade-offs between task performance and information acquisition, offering quantitative insights into how the leader’s revealing strategy interacts with the follower’s tracking objective. These results demonstrate that our framework captures the essence of adversarial strategic inference, balancing the leader’s primary control objective with effective identification of the follower’s hidden intention.

\smallskip
\noindent\textbf{Notations.}
We fix a finite time horizon \([0,T]\). For any stochastic process \(\{X_s\}_{s\in[0,t]}\) (resp. \(\{X_s\}_{s\in[0,T]}\)), we write \(X_{[0,t]}\) (resp. \(X\)) and denote its realized sample path by \(x_{[0,t]}\) (resp. \(x\)). Superscripts \(L\) (resp. \(F\)) indicate quantities associated with the leader (resp. follower). Let \(\mathcal{C}_t\) denote the space of real-valued continuous functions on \([0,t]\). For \(f \in \mathcal{C}_t\), \(\|f\|_1\) and \(\|f\|_\infty\) denote its \(L^1\) and \(L^\infty\) norms, respectively. For vectors (resp. matrices), \(\|\cdot\|\) denotes the \(\ell^2\)-norm (resp. matrix 2-norm). We write \(\mathbb{S}^{N\times N}\) for the space of symmetric \(N\times N\) matrices, with inequalities understood in the positive semi-definite sense. The operator \(\diag\{\cdot\}\) maps a vector to a diagonal matrix, and for \(N\in\mathbb{N}\), we denote \([N] := \{1,2,\dots,N\}\).
Denote by \(\mathcal{P}(\R)\) the collection of probability measures on \(\R\).

\smallskip

The rest of the paper is organized as follows. 
Section~\ref{subsec:followers_problem} begins with the formulation of the follower's entropy-regularized tracking problem. Section~\ref{sec:leader_and_MLE} shifts to the leader's perspective: Section~\ref{subsec:MLE} derives the maximum likelihood estimator (MLE) for the follower’s latent parameter, and Section~\ref{subsec:leaders_problem} formulates the leader’s primary control objective. These components are integrated in Section~\ref{subsec:cost_functionals} into a unified stochastic control problem for the leader. Section~\ref{subsec:repeated_experiments} extends the framework to a multi-period setting. Numerical methods and results are presented in Section~\ref{sec:numerics}, and concluding remarks appear in Section~\ref{sec:conclusions}.

\section{The Follower’s Tracking Problem}\label{subsec:followers_problem}

In this section, we model the tracking problem faced by the follower, given the realization of the leader's state trajectory.
Using tools from stochastic control, we derive a semi-explicit solution for the follower's optimal policy and establish its global well-posedness, serving as the foundation for subsequent discussions.

Let $(\Omega,\F,\mathbb{P})$ be a probability space supporting a one-dimensional Brownian motion $W^F$. The follower's state process $X^F$ evolves according to a randomized policy $\pi^F$, subject to exogenous noise modeled by \(W^F\):
\begin{equation}\label{eq:follower_explore_dynamics}
    \ud X_t^F = \Big(A_F X_t^F  + B_F\int_\R y\,\pi^F_t(y)\ud y\Big)\ud t + \sigma_F \ud W^F_t,\quad X^F_0 = x^F_0,
\end{equation}
where $A_F,B_F,x^F_0, \sigma_F \in\R$ and $\sigma_F>0$. Given a realization \(x^L\) of the leader's state process \(X^L\) (whose dynamics are described in  Section~\ref{subsec:leaders_problem}), the follower minimizes its expected cost of the form:
\begin{multline}\label{eq:follower_objective_func}
    J^F(\pi^F; x^L) := \mathbb{E} \Big[\int_0^T \Big(\frac{Q_F}{2}\big(X^F_t - Mx^L_t\big)^2 +\frac{R_F}{2} \int_\R y^2 \,\pi_t^F(y)\ud y\\
    + \lambda_F\int_\R  \pi_t^F(y)\log\pi_t^F(y) \ud y\Big) \ud t\Big],
\end{multline}
where $Q_F, R_F, \lambda_F > 0$, and the dilation factor $M\in \R$.

By \emph{a randomized control} $\pi^F$, we mean a \(\mathcal{P}(\R)\)-valued process, which can
be identified as a random product measure on $[0,T]\times \R$, whose projection on $[0, T]$ coincides with the Lebesgue measure.
For notational convenience, we do not distinguish between a measure and its associated density function. Accordingly, \(\pi^F_t(y)\) denotes the density associated with measure \(\pi^F_t\) evaluated at \(y\in\R\), with dependencies on the sample point \(\omega\in\Omega\) omitted.

In this paper, we further restrict attention to ``Markovian'' randomized control:
\begin{equation}
    \pi^F_t := \pi^F(\cdot \mid t, X_t^F),\quad \text{where}\quad \pi^F:[0,T]\times\R\to\mathcal{P}(\R).
\end{equation}
For well-posedness of the control problem, we assume that, the density \(\pi^F(\cdot \mid t, x) \) has a finite second moment and finite entropy, and satisfies the growth condition: \(\sup_{(t,x)\in[0,T]\times\R} (\int_\R |y|\,\pi^F(t,x)(y)\ud y)/(1+|x|)<\infty\). For simplicity, we focus on the one-dimensional case and leave multidimensional extensions for future work.

\begin{remark}[Model interpretation]
\label{rem:interp}
In our model, states and controls can be interpreted as position and velocity, whose evolution follows the linear state dynamics~\eqref{eq:follower_explore_dynamics} under physical laws subject to random noise. 
The parameter \(\sigma_F\) quantifies the intensity of the exogenous noise and is unobservable to both agents.

The follower's policy \(\pi^F_t\) represents a randomized decision rule, reflecting the adversarial nature of the Stackelberg game: the follower deliberately introduces randomness to obscure its decision-making logic and thereby intends to limit the leader’s ability to infer its strategy from the observed state. However, the state dynamics~\eqref{eq:follower_explore_dynamics} depend only on the mean of the randomized actions rather than on individual random draws.
This restriction arises from a measurability constraint, which will be discussed in Remark~\ref{rem:measurability}. 

The expected cost $J^F$ encodes the follower's incentives: tracking a dilated version of the leader's trajectory $x^L$ while minimizing control efforts, subject to the maximum entropy principle (MEP) \cite{jaynes1957information}.
Throughout the game, the dilation factor \(M\), which encodes the follower's preference, remains unknown to the leader.

It is worth noting that, in the reinforcement learning (RL) literature, the MEP is commonly used to encourage exploration in unknown environments \cite{eysenbach2021maximum,hazan2019provably,wang2020reinforcement}. In contrast, our model assumes that both agents possess full knowledge of the environment defined by~\eqref{eq:follower_explore_dynamics}--\eqref{eq:follower_objective_func} (except for $M$), while the follower retains policy stochasticity to hide its intention rather than to explore.
\end{remark}

\begin{remark}[Measurability issue]
    \label{rem:measurability}
    For a continuum of independently sampled actions \(u^F_t\sim \pi^F_t,\ \forall t\in[0,T]\), the mapping \(t\mapsto u^F_t\) is generally non-measurable with respect to the Lebesgue measure, except in trivial cases where the flow of measures \(\pi^F\) degenerates \cite{sun2006exact}.
    Consequently, replacing \(\int_\R y\,\pi^F_t(y)\ud y\) in the drift of~\eqref{eq:follower_explore_dynamics} by the random draw \(u^F_t\) destroys measurability,  and the existence of a measurable solution \(X^F\) is no longer guaranteed.

    Classical remedies for this issue include the Fubini extension and the exact law of large numbers \cite{sun2006exact,sun2009individual}, but come at the cost of extending the Lebesgue measure. We refer interested readers to \cite{carmona2025reconciling,frikha2023actor} for additional discussions and examples.

    To address this issue while preserving the adversarial interpretation of our model, we adopt the framework of relaxed controls \cite{fleming1984stochastic,nicole1987compactification,wang2020reinforcement,zhou1992existence}, in which the control enters the dynamics through the mean of the randomization rather than the random sample itself.
    In particular, when \(\pi^F_t(\cdot|x) = \mathcal{N}(m^F_t(x),(\Sigma^F)^2)\) is Gaussian, as in the optimal case established in Proposition~\ref{prop:follower_optimal_control}, the control can be written as
    \[u^F_t = m^F_t(X^F_t) + \Sigma^F\xi_{t}, \quad \xi_t\overset{\mathrm{i.i.d.}}{\sim}\mathcal{N}(0,1) \text{ independent of \(W^F\).} \]
    An Euler scheme of~\eqref{eq:follower_explore_dynamics} on a time grid with step size $h$ yields the accumulated randomization term \(\sum_{t\in\Delta}\xi_t h \sim \mathcal{N}(0,Th)\), which vanishes as \(h\to 0\).
    In other words, on a fine enough time discretization grid (as the continuous-time limit), using controls \(m^F_t(x)\) or \(u^F_t\) yields identical state trajectories.
\end{remark}

Based on dynamic programming, we derive Proposition~\ref{prop:follower_optimal_control}, providing a semi-explicit solution of the follower's optimal policy. The detailed proof is deferred to Appendix~\ref{sec:appendix_a}.

\begin{proposition}
\label{prop:follower_optimal_control}
    Given the leader's state trajectory $x^L$, the follower's optimal Markovian randomized policy \(\pi^{F,*}\) for problem~\eqref{eq:follower_explore_dynamics}--\eqref{eq:follower_objective_func} is given by
    \begin{equation}
        \label{eqn:F_opt_pi}
      \pi^{F,*}(\cdot| t, x) =
      \mathcal N\Big(-\frac{B_F}{R_F}\bigl(2a_t x +b_t\bigr),\frac{\lambda_F}{R_F}\Big),
    \end{equation}
    where $a_t$ and $ b_t$ satisfy the ordinary differential equations (ODEs):
    \begin{align}
        \label{eq:follower_ODE_sys_a}
      \dot{a}_t &= \frac{2B_F^{2}}{R_F}a^{2}_t - 2A_F a_t -\frac{Q_F}{2},\quad \dot{b}_t =  \frac{2B_F^{2}}{R_F}a_tb_t - A_Fb_t + Q_F M x_t^L,
    \end{align}
    with terminal conditions \(a_T = 0,\ b_T = 0\).
\end{proposition}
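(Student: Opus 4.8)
The plan is to solve the follower's problem by dynamic programming and then verify the candidate. I would first introduce the value function $V(t,x):=\inf_{\pi^F}\mathbb{E}\big[\int_t^T(\cdots)\,\mathrm{d}s \mid X_t^F=x\big]$ for~\eqref{eq:follower_explore_dynamics}--\eqref{eq:follower_objective_func} and write down the associated HJB equation with terminal condition $V(T,\cdot)=0$. Since an admissible density $\pi$ enters the HJB only through $B_F(\partial_x V)\int_\R y\,\pi(y)\,\mathrm{d}y$, the running control cost $\frac{R_F}{2}\int_\R y^2\pi(y)\,\mathrm{d}y$, and the entropy $\lambda_F\int_\R\pi\log\pi$, the pointwise inner minimization over $\pi\in\mathcal P(\R)$ is a Gibbs variational problem. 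Its unique minimizer is the Boltzmann density $\pi^*(y)\propto\exp\!\big(-\tfrac{1}{\lambda_F}(B_F(\partial_x V)y+\tfrac{R_F}{2}y^2)\big)$, i.e.\ the Gaussian $\mathcal N\!\big(-\tfrac{B_F}{R_F}\partial_x V,\tfrac{\lambda_F}{R_F}\big)$, and the optimal value of that bracket equals $-\tfrac{B_F^2}{2R_F}(\partial_x V)^2$ up to an explicit additive constant coming from the Gaussian normalizing integral. Substituting back produces a semilinear HJB PDE with a quadratic-in-$\partial_x V$ nonlinearity.

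Next I would close the equation with the quadratic ansatz $V(t,x)=a_tx^2+b_tx+c_t$. Inserting it into the reduced HJB and matching the coefficients of $x^2$, $x^1$, $x^0$ yields, respectively, the scalar Riccati equation for $a_t$ and the linear equation for $b_t$ in~\eqref{eq:follower_ODE_sys_a}, plus an auxiliary ODE for $c_t$ that does not affect the policy; the terminal condition $V(T,\cdot)=0$ forces $a_T=b_T=0$. Evaluating the Boltzmann minimizer at $\partial_x V=2a_tx+b_t$ then gives exactly~\eqref{eqn:F_opt_pi}.

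For global well-posedness I would first bound $a_t$. Reading the Riccati equation $\dot a_t=\frac{2B_F^2}{R_F}a_t^2-2A_Fa_t-\frac{Q_F}{2}$ backward from $a_T=0$, its right-hand side is an upward parabola in $a$ with two real roots $a_-<0<a_+$ (their product $-\frac{Q_FR_F}{4B_F^2}$ is negative). Since $a_T=0$ lies strictly between the equilibria, a standard invariant-interval/comparison argument keeps $a_t\in[0,a_+)$ on all of $[0,T]$, so $a$ exists globally and is bounded with no finite-time blow-up. Given this, $b_t$ solves a linear ODE whose coefficients are continuous on $[0,T]$ (here $x^L$ is a fixed continuous realization, hence bounded), so $b$ exists, is unique and bounded, and the same holds for $c_t$ by direct integration. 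Consequently the optimal feedback has affine-in-$x$ drift with bounded continuous coefficients, so the closed-loop equation~\eqref{eq:follower_explore_dynamics} is a linear SDE and is well-posed, and the stated policy lies in the admissible class (affine mean, constant variance $\lambda_F/R_F$, finite moments and entropy).

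Finally I would make the derivation rigorous through a verification theorem: applying It\^o's formula to $s\mapsto V(s,X^F_s)$ under an arbitrary admissible $\pi^F$ and using the admissibility assumptions (finite second moment, finite entropy, linear-growth control of the first moment) to guarantee that the stochastic integral is a true martingale and all time integrals are finite, one gets $J^F(\pi^F;x^L)\ge V(0,x^F_0)$, with equality for~\eqref{eqn:F_opt_pi}. I expect this verification step---in particular justifying the interchange of infimum and integration in the Gibbs step within the admissible class and confirming the martingale property---to be the main technical obstacle, while the Riccati bound and the coefficient matching are essentially routine.
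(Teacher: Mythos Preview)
Your proposal is correct and follows essentially the same route as the paper: derive the HJB equation via dynamic programming, solve the inner minimization as a Gibbs variational problem to obtain the Gaussian policy $\mathcal N\bigl(-\tfrac{B_F}{R_F}\partial_x V,\tfrac{\lambda_F}{R_F}\bigr)$, then close the PDE with the quadratic ansatz $V(t,x)=a_tx^2+b_tx+c_t$ and match coefficients. You additionally supply a self-contained invariant-interval argument for the Riccati equation (which the paper defers to Theorem~\ref{thm:follower_ODE} and handles by citing a standard reference) and sketch a verification step, but these are refinements rather than a different strategy.
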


    Denote by \(X^{F,*}\) the follower's optimal state process induced by \(\pi^{F,*}\), which satisfies the linear SDE:
    \begin{equation}
        \label{eqn:opt_state}
        \ud X_t^{F,\ast} = \Big(f(t)X_t^{F,\ast} - \frac{B_F^2}{R_F} b_t\Big) \ud t + \sigma_F \ud W_t^F,\quad \text{where}\quad f(t) := A_F - \frac{2B_F^2}{R_F}a_t.
    \end{equation}
    This is an Ornstein-Uhlenbeck process with the closed-form solution
    \begin{equation*}
        X^{F,*}_t = x^F_0e^{\int_0^t f(s)\ud s} - \frac{B_F^2}{R_F}\int_0^t e^{\int_s^t f(u)\ud u} b_s\ud s \\
        + \sigma_F \int_0^t e^{\int_s^t f(u)\ud u}\ud W^F_s.
    \end{equation*}
    Hence \(X^{F,*}_t\) is Gaussian with mean \( x^F_0e^{\int_0^t f(s)\ud s} - \frac{B_F^2}{R_F}\int_0^t e^{\int_s^t f(u)\ud u} b_s\ud s\) and variance \(\sigma_F^2 \int_0^t e^{2\int_s^t f(u)\ud u}\ud s\).

The existence of the follower’s optimal policy \(\pi^{F,*}\) relies on the solvability of the ODE system~\eqref{eq:follower_ODE_sys_a}. We establish its global well-posedness in the following theorem, with the proof provided in Appendix~\ref{sec:appendix_a}.

\begin{theorem}
\label{thm:follower_ODE}
    The ODE system~\eqref{eq:follower_ODE_sys_a} admits a unique solution on $[0,T]$, $\forall T>0$.  
\end{theorem}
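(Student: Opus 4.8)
The plan is to exploit the decoupled, triangular structure of~\eqref{eq:follower_ODE_sys_a}: the equation for $a_t$ is an autonomous scalar Riccati ODE that does not involve $b_t$, and once $a$ is known on $[0,T]$ the equation for $b_t$ becomes linear in $b_t$. So I would first establish global well-posedness of $a$, and then deduce it for $b$ from classical linear ODE theory.

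\textbf{Step 1 (the Riccati equation for $a$).} Reverse time by setting $\tau = T-t$ and $\tilde a(\tau) := a_{T-\tau}$, so that $\tilde a$ solves the forward autonomous ODE
\[
\dot{\tilde a}(\tau) = -\tfrac{2B_F^2}{R_F}\,\tilde a(\tau)^2 + 2A_F\,\tilde a(\tau) + \tfrac{Q_F}{2},\qquad \tilde a(0)=0.
\]
When $B_F=0$ this is linear and the conclusion is immediate, so assume $B_F\neq 0$. The right-hand side $g(\tilde a)$ is then a concave quadratic with $g(0)=Q_F/2>0$, hence has two distinct real roots $a_-<0<a_+$ and satisfies $g>0$ on $(a_-,a_+)$. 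Starting from $\tilde a(0)=0$ the trajectory is therefore strictly increasing and, by uniqueness, cannot cross the equilibrium point $a_+$; hence $\tilde a(\tau)\in[0,a_+)$ throughout its maximal interval of existence. This a priori bound, combined with the local Lipschitz continuity of the quadratic nonlinearity, rules out finite-time blow-up and yields a unique global solution. Translating back, $a_{[0,T]}$ exists, is unique, and satisfies $0\le a_t<a_+$ on $[0,T]$ for every $T>0$. (Alternatively, being autonomous, the equation can be integrated explicitly by separating variables and using partial fractions over $a_\pm$, which exhibits the same conclusion.)

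\textbf{Step 2 (the linear equation for $b$).} With $a_{[0,T]}$ bounded and continuous, the coefficient $t\mapsto \tfrac{2B_F^2}{R_F}a_t-A_F$ is continuous on $[0,T]$; moreover $X^L$ has continuous sample paths, so $x^L\in\mathcal C_T$ and the forcing term $t\mapsto Q_F M x^L_t$ is continuous as well. Thus the $b$-equation is a first-order linear ODE with continuous coefficients on the compact interval $[0,T]$ with terminal data $b_T=0$, and variation of constants gives the unique solution
\[
b_t = -Q_F M\int_t^T \exp\!\Big(\int_s^t\big(\tfrac{2B_F^2}{R_F}a_r-A_F\big)\ud r\Big)x^L_s\ud s,\qquad t\in[0,T].
\]
Combining Steps 1 and 2 completes the proof.

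The only real obstacle is ruling out finite-time blow-up of the Riccati equation in Step 1; everything else is standard. The key point is that solving \emph{backward} from $a_T=0$, together with the sign $Q_F>0$, confines the trajectory to the bounded forward-invariant set $[0,a_+)$, rather than driving it to $-\infty$ in finite time as would occur for other terminal data or sign configurations.
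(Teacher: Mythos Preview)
Your proof is correct and follows the same overall plan as the paper: exploit the triangular structure by first establishing global existence of the Riccati solution $a$, then deduce existence of $b$ from linear ODE theory. The difference is in how Step~1 is handled. The paper simply invokes a standard result on matrix Riccati equations (Theorem~4.1.6 in Abou-Kandil et al.), noting that the sign conditions on $R_F$, $Q_F$, and the terminal datum guarantee global solvability. You instead give a self-contained scalar argument: reverse time, identify the equilibria $a_-<0<a_+$ of the autonomous quadratic vector field, and trap the trajectory in the forward-invariant interval $[0,a_+)$. Both approaches are valid; the citation is shorter, while your phase-line argument is more transparent in this one-dimensional setting and yields the explicit a~priori bound $a_t\in[0,a_+)$, which is pleasant to have and makes clear exactly why the positivity of $Q_F$ (forcing $g(0)>0$) is the crucial structural hypothesis.
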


\section{The Leader's Inference and Optimization}\label{sec:leader_and_MLE}
This section presents the leader's perspective in the Stackelberg game, building on the follower’s tracking problem in Section~\ref{subsec:followers_problem}.
With full knowledge about the follower's problem and optimal solution (except for the unknown parameter $M$), the leader strategically designs its control at the outset of the game to extract the maximal information about $M$.

Specifially, Section~\ref{subsec:MLE} introduces the maximum likelihood estimator (MLE) for $M$, computed at the final step of the game based on the realization of the follower's state trajectory.
Section~\ref{subsec:leaders_problem} defines the leader's primary task, which is then integrated with two information metrics (the estimator variance and Fisher information) in Section~\ref{subsec:cost_functionals}, leading to the complete formulation of the leader's optimization problem. Finally, Section~\ref{subsec:repeated_experiments} extends the framework to a multi-period setting.

\subsection{Maximum Likelihood Estimator for \(M\)}\label{subsec:MLE}
At the final step of the game, the leader constructs an MLE for \(M\), after the sample path of \(X^{F, \ast}\) has been realized.

In the follower's optimal state dynamics~\eqref{eqn:opt_state}, \(b_t\) is the only component that admits dependencies on \(M\), which is unknown to the leader. Solving the ODE~\eqref{eq:follower_ODE_sys_a} for \(b_t\) gives 
\begin{equation}
    \label{eqn:g}
    b_t = M\Big(-Q_F\int_t^T e^{\int_t^s f(u) \ud u}x_s^L \ud s \Big)=:M\,g(t,x^L),
\end{equation} 
where \(g:[0,T]\times \mathcal{C}_T\to \R\) does not depend on parameter \(M\).

Applying Girsanov's theorem \cite[Theorem~1.1.2]{kutoyants2013statistical} to the follower's optimal state dynamics~\eqref{eqn:opt_state} yields the log-likelihood of observing \(X^{F,*}\) under \(M\), conditional on the leader's trajectory \(x^L\) :
\begin{multline} \label{eq:log_likelihood}
  \ell(X^{F,*}|X^L = x^L;M) = \frac{1}{\sigma^2_F} \int_0^T \Big(f(t)X^{F,*}_t - \frac{B_F^2}{R_F}Mg(t,x^L)\Big) \ud X^{F,*}_t \\
    - \frac{1}{2\sigma^2_F} \int_0^T \Big(f(t)X^{F,*}_t - \frac{B_F^2}{R_F}Mg(t,x^L)\Big)^2 \ud t. 
\end{multline}

Given observations of both \(X^{F,*}\) and \(X^{L}\), the leader's MLE for \(M\) should maximize the joint log-likelihood:
\begin{equation}
    \ell(X^{F,*},X^{L};M) = \ell(X^{F,*}|X^{L};M) + \ell(X^{L};M),
\end{equation}
where \(\ell(X^{L};M)\) is independent of  \(M\), due to the leader's inability of accessing \(M\) during its decision-making process (cf. Section~\ref{subsec:leaders_problem}).
Here, the MLE \[\widehat{M}:= \argsup_M \ell(X^{F,*},X^{L};M)\] is obtained by  maximizing \eqref{eq:log_likelihood}, which is quadratic in $M$, and reads
\begin{equation}\label{eq:MLE_estimator}
  \widehat{M} = \frac{\int_0^T f(t)\,g(t, X^L)X_t^{F, *} \ud t - \int_0^T g(t, X^L) \ud X_t^{F, *}}{\frac{B_F^2}{R_F}\int_0^T g^2(t, X^L) \ud t}. 
\end{equation}
This estimator is almost surely well-defined since \(g\not\equiv 0\ \mathrm{a.s.}\).

Proposition~\ref{prop:MLE_calc} below summarizes the mean, variance, and Fisher information of this MLE, with proof given in Appendix~\ref{sec:appendix_b}.
\begin{proposition}
    \label{prop:MLE_calc}
    The mean, variance, and Fisher information of the MLE~\eqref{eq:MLE_estimator} are
    \begin{align}
        \label{eqn:MLE_var}
        &\E[\widehat{M}] = M,\qquad  \mathrm{Var}[\widehat{M}] = \frac{\sigma_F^2R_F^2}{B_F^4}\,\E\Bigg[\frac{1}{\int_0^T g^2(t, X^L) \ud t}\Bigg],\\
        \label{eqn:MLE_FI}
        &I(M) = \frac{B_F^4}{\sigma_F^2R_F^2} \E\Bigg[\int_0^T g^2(t,X^L)\ud t\Bigg],
    \end{align}
    where the Fisher information is \(I(M) := \E \Big[(\partial_M \ell(X^{F,*},X^L;M))^2\Big]\).
\end{proposition}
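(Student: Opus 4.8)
The plan is to work conditionally on the leader's trajectory $X^L = x^L$ first, so that $g(t,x^L)$ and $f(t)$ become deterministic functions of $t$, and then to remove the conditioning by taking expectations over $X^L$. The key observation is that the numerator of $\widehat{M}$ in~\eqref{eq:MLE_estimator} can be rewritten using the dynamics~\eqref{eqn:opt_state}. Indeed, substituting $\ud X_t^{F,*} = (f(t)X_t^{F,*} - \tfrac{B_F^2}{R_F}Mg(t,x^L))\ud t + \sigma_F \ud W_t^F$ into the term $\int_0^T g(t,x^L)\ud X_t^{F,*}$, the $f(t)X_t^{F,*}$ contributions cancel against $\int_0^T f(t)g(t,x^L)X_t^{F,*}\ud t$, leaving
\[
\widehat{M} - M = \frac{-\sigma_F \int_0^T g(t,x^L)\ud W_t^F}{\tfrac{B_F^2}{R_F}\int_0^T g^2(t,x^L)\ud t}.
\]
This identity is the crux: conditional on $X^L = x^L$, the numerator is a mean-zero Wiener integral with deterministic integrand, hence Gaussian, and the denominator is deterministic. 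Unbiasedness $\E[\widehat{M}\mid X^L] = M$ follows immediately, and then $\E[\widehat{M}] = M$ by the tower property.

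For the conditional variance, I would use the It\^o isometry: $\Var\big[\int_0^T g(t,x^L)\ud W_t^F \mid X^L = x^L\big] = \int_0^T g^2(t,x^L)\ud t$. Dividing by the square of the deterministic denominator gives
\[
\Var[\widehat{M}\mid X^L = x^L] = \frac{\sigma_F^2 \int_0^T g^2(t,x^L)\ud t}{\tfrac{B_F^4}{R_F^2}\big(\int_0^T g^2(t,x^L)\ud t\big)^2} = \frac{\sigma_F^2 R_F^2}{B_F^4}\cdot\frac{1}{\int_0^T g^2(t,x^L)\ud t}.
\]
Since $\widehat{M}$ is conditionally unbiased, the law of total variance gives $\Var[\widehat{M}] = \E[\Var[\widehat{M}\mid X^L]] + \Var[\E[\widehat{M}\mid X^L]] = \E[\Var[\widehat{M}\mid X^L]] + 0$, which yields~\eqref{eqn:MLE_var}. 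For the Fisher information, since the log-likelihood~\eqref{eq:log_likelihood} is quadratic in $M$, $\partial_M^2 \ell$ is deterministic given $x^L$, equal to $-\tfrac{B_F^4}{\sigma_F^2 R_F^2}\int_0^T g^2(t,x^L)\ud t$; I would compute $\partial_M \ell$ directly, confirm $\E[\partial_M\ell \mid X^L] = 0$ using the same Wiener-integral argument, and then either use $I(M\mid x^L) = -\E[\partial_M^2\ell\mid x^L]$ or compute $\E[(\partial_M\ell)^2\mid x^L]$ via It\^o isometry; averaging over $X^L$ gives~\eqref{eqn:MLE_FI}.

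The main obstacle is not any single computation but rather justifying the conditioning rigorously: one must check that the pair $(X^L, W^F)$ has the right independence/measurability structure so that, conditional on $X^L$, the process $W^F$ is still a Brownian motion and $g(\cdot,X^L)$ is a legitimate deterministic integrand for the It\^o isometry and Girsanov's theorem. This requires $X^L$ to be built from data independent of $W^F$ (the leader commits to its trajectory before the follower's noise is realized), together with the finiteness $\int_0^T g^2(t,x^L)\ud t < \infty$ a.s.\ and $\int_0^T g^2(t,x^L)\ud t > 0$ a.s.\ (the latter already noted after~\eqref{eq:MLE_estimator}), plus integrability of $1/\int_0^T g^2$ to make the outer expectation in~\eqref{eqn:MLE_var} meaningful. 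Once the conditional framework is set up cleanly, the rest is the routine It\^o-isometry and tower-property bookkeeping sketched above.
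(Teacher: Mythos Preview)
Your proposal is correct and follows essentially the same route as the paper: substitute the dynamics~\eqref{eqn:opt_state} into the MLE to obtain the key identity $\widehat{M}-M = -\sigma_F\int_0^T g\,\ud W^F_t \big/ \big(\tfrac{B_F^2}{R_F}\int_0^T g^2\,\ud t\big)$, then compute the conditional mean, variance, and Fisher information via the It\^o isometry and lift to the unconditional statements by the tower property and the conditional variance decomposition. The paper's proof is terser and omits the measurability/independence discussion you flag as the main obstacle, but the argument is otherwise identical.
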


\begin{remark}[Observability of \(\sigma_F\)]\label{rem:discrete_time}
Although \(\sigma_F\) representing the magnitude of exogenous noises (cf. Remark~\ref{rem:interp}) remains initially unknown to both agents, the leader can perfectly estimate \(\sigma_F\) using the quadratic variation, as long as it has continuous observations of the follower's realized state trajectory \(x^{F,*}\) \cite[Theorem~4]{florens1989approximate}. 

    Therefore, all model parameters appearing in the variance and Fisher information, including \(B_F,R_F\), already assumed known to the leader, are accessible. This justifies our ability to integrate them into the leader's optimization problem.

    In Appendix~\ref{sec:appendix_c}, we further consider a realistic extension in which \(x^{F,*}_t\) is observed only at discrete times. In this setting, the quadratic variation no longer yields an exact estimate of \(\sigma_F\).
    We derive joint MLEs for \(M\) and \(\sigma_F\), and show that as the observation grid becomes denser the MLE for \(M\) (resp. for \(\sigma_F\)) converges to \eqref{eq:MLE_estimator} (resp. to the true \(\sigma_F\)).
    This establishes the continuous-observation setting in our model as the limiting case of the discrete-observation model.
\end{remark}

\subsection{The Leader's Primary Task}\label{subsec:leaders_problem}
Consider the same probability space \linebreak \((\Omega, \F, \Prob)\), which supports another one-dimensional Brownian motion $W^L$ independent of $W^F$. The leader's state $X^L$ evolves according to the control process $u^L$:
\begin{equation}\label{eq:leader_dynamics}
    \ud X^L_t = \Big(A_L X_t^L + B_L u^L_t \Big) \ud t + \sigma_L \ud W^L_t,\quad X^L_0 = x^L_0,
\end{equation}
where $A_L, B_L,x^L_0, \sigma_L \in \R$ and $\sigma_L > 0$. 

The leader's primary objective is to track a continuous deterministic trajectory \(F:[0,T] \to \R\) while minimizing control efforts. This is represented by:
\begin{equation}
    \label{eqn:leader_primary}
    J^L_{\mathrm{P}}(u^L) := \E\Big[ \int_0^T \Big(\frac{Q_L}{2}(X^L_t - F(t))^2 + \frac{R_L}{2}(u^L_t)^2\Big)\ud t + \frac{Q_{L,T}}{2}(X^L_T - F(T))^2\Big],
\end{equation}
where \(Q_L,R_L,Q_{L,T}>0\). The interpretation of the leader's dynamics and costs parallels that of the follower (in Remark~\ref{rem:interp}), and is therefore omitted here.

Motivated by the path-dependent feature of the information metrics~\eqref{eqn:MLE_var}--\eqref{eqn:MLE_FI} on $X^L$ through $g$ (cf. \eqref{eqn:g}), here we consider closed-loop controls $u_t^L \in \F^L_t := \sigma(X^L_s,\ \forall s\in[0,t])$. That is,
\begin{equation}
    \label{eqn:leader_info}
    u^L_t := u^L(t,X^L_{[0,t]}),\quad \text{where}\quad u^L:[0,T]\times \mathcal{C}_t\to\R,
\end{equation}
so that the leader's decision at $t$ depends on the full history of its own state. For well-posedness, the measurable function \(u^L\) is assumed to satisfy the growth condition \(\sup_{(t,\gamma)\in[0,T]\times\mathcal{C}_t} |u^L(t,\gamma)|/(1 + \sup_{s\in[0,t]}|\gamma(s)|)<\infty\).

\subsection{The Leader's Strategic Inference}\label{subsec:cost_functionals}
Acting at the beginning of the Stackelberg game, the leader pursues two objectives: fulfilling the primary task while maximizing efficiency of the final estimation step.
In other words, the leader takes into account the follower's potential reactions, and optimally designs its control to extract as much information as possible about the follower’s parameter \(M\).

Two information metrics, the variance and the Fisher information (FI), derived in Proposition~\ref{prop:MLE_calc}, form the basis of the leader’s inference objective.
A smaller variance corresponds to a more efficient estimator, while a larger FI implies higher asymptotic efficiency \cite{kutoyants2013statistical}.


\smallskip
\noindent\textbf{Variance Minimization.}
The leader's first formulation seeks to minimize the variance of the estimator:
\begin{equation}\label{eq:leader_var_objective_func}
  J_{\Var}^L(u^L) := \lambda_L\frac{\sigma_F^2R_F^2}{B_F^4}\,\E\Bigg[\frac{1}{\int_0^T g^2(t, X^L) \ud t}\Bigg] + J^L_{\mathrm{P}}(u^L),
\end{equation}
where \(\lambda_L\geq 0\) quantifies the intensity of strategic inference. Due to the reciprocal integral term, this optimization problem rarely admits a semi-explicit solution and will be addressed numerically in Section~\ref{sec:numerics}.

\smallskip
\noindent\textbf{Fisher Information Maximization.}
Alternatively, the leader may maximize the Fisher information, leading to the problem
\begin{equation}\label{eq:leader_fisher_objective_func}
  J_I^L(u^L) := -\lambda_L \frac{B_F^4}{\sigma_F^2R_F^2} \E\Bigg[\int_0^T g^2(t,X^L)\ud t\Bigg] + J^L_{\mathrm{P}}(u^L),
\end{equation}
where \(\lambda_L\geq 0\) again measures the strength of the inference incentive. By Jensen’s inequality, $$\E\big[\tfrac{1}{\int_0^T g^2(t, X^L) \ud t}\big] \geq \tfrac{1}{\E[\int_0^T g^2(t, X^L) \ud t]}.$$
Hence, maximizing FI~\eqref{eq:leader_fisher_objective_func} can be viewed as minimizing a lower bound of the variance~\eqref{eq:leader_var_objective_func}.

The advantage of considering FI maximization~\eqref{eq:leader_fisher_objective_func} lies in its potential analytical tractability. Despite the inherently path-dependent nature of the problem, we augment the state variables by introducing auxiliary path-dependent components to Markovianize the dynamics. This reformulation enables the construction of a semi-explicit Markovian optimal control in terms of the augmented states (cf. Proposition~\ref{prop:leader_optimal_control}). Such a construction serves two purposes.
First, it provides a semi-explicit optimal control for~\eqref{eq:leader_fisher_objective_func} within a subclass of admissible controls~\eqref{eqn:leader_info}, offering a potentially optimal candidate among all path-dependent decision rules.
Second, it offers a natural benchmark for the deep learning algorithm developed in Section~\ref{sec:numerics} to address the variance-based formulation~\eqref{eq:leader_var_objective_func}.

The rest of this section focuses on such derivations. To this end, we augment the leader’s state by introducing processes $Y$ and $Z$:
\begin{equation}
    \label{eqn:Y_Z}
    Y_t := -\int_0^t h(s) X_s^L \ud s, \quad Z_t := \int_0^t k(s) Y_s \ud s,\ \forall t\in[0,T],
\end{equation}
where \(h(t) := Q_F e^{\int_0^t f(u)\ud u}\) and \(k(t) := e^{-2\int_0^t f(u) \ud u}\).

We then restrict attention to controls that are Markovian in $(X^L_t, Y_t, Z_t)$, i.e. 
\begin{equation}
    \label{eqn:aug_info}
    u^L_t := u^L_{\mathrm{sub}}(t,X^L_t,Y_t,Z_t), \quad \text{where}\quad u^L_{\mathrm{sub}}:[0,T]\times \R^3\to\R,
\end{equation}
where the measurable function \(u^L_{\mathrm{sub}}\) satisfies \(\sup_{(t,\psi)\in [0,T]\times\R^3} |u^L_{\mathrm{sub}}(t,\psi)|/(1 + \|\psi\|)<\infty\), \(\psi:=(x,y,z)\in\R^3\). The corresponding augmented state process is denoted by $\Psi_t := (X_t^L, Y_t, Z_t)$. Clearly, the class of controls~\eqref{eqn:aug_info} is a subset of~\eqref{eqn:leader_info}; hence, optimal controls of this form may be suboptimal within the larger class~\eqref{eqn:leader_info}.

Under this subclass, Proposition~\ref{prop:leader_optimal_control} provides a semi-explicit solution for the leader's optimization problem. See Appendix~\ref{sec:appendix_b} for its proof.

\begin{proposition}\label{prop:leader_optimal_control}
For the FI maximization formulation~\eqref{eq:leader_dynamics} and~\eqref{eq:leader_fisher_objective_func},  the leader's optimal control $u^{L,\ast}$ (under the information structure~\eqref{eqn:aug_info}) is given by
\begin{equation}
    \label{eqn:leader_opt_ctrl}
    u^{L, \ast}_{\mathrm{sub}}(t,\psi)=-\frac1{R_L}\mathcal{B}\transpose (2 L_t \psi+M_t),
\end{equation}
where $L_t\in \mathbb{S}^{3\times3}$ and $M_t\in \mathbb{R}^3$ satisfy the ODEs    
\begin{align}
    \label{eq:leader_ODE_system_L}
    &\dot{L}_t + L_t\mathcal{A}(t) + \mathcal{A}(t)\transpose L_t - \frac{2}{R_L}L_t\mathcal{B}\mathcal{B}\transpose L_t + \mathcal{Q}(t) = 0,  \\
    \label{eq:leader_ODE_system_M}
    &\dot{M}_t + \mathcal{A}(t)\transpose M_t - \frac{2}{R_L}L_t\mathcal{B}\mathcal{B}\transpose M_t -Q_L F(t)e_1 = 0,
\end{align}
with terminal conditions \(L_T\) and \(M_T\).
Here,
\begin{align*}
    &\mathcal{A}(t) := \begin{bmatrix}
        A_L & 0 & 0\\ -h(t) & 0 & 0\\ 0 & k(t) & 0
    \end{bmatrix},\
    L_T := \begin{bmatrix}
        \frac{Q_{L, T}}{2} & 0 & 0 \\
        0 & -\frac{B_F^4\lambda_L}{\sigma_F^2 R_F^2} \int_0^T k(s) \ud s & \tfrac{B_F^4\lambda_L}{\sigma_F^2 R_F^2} \\
        0 & \tfrac{B_F^4\lambda_L}{\sigma_F^2 R_F^2} & 0
    \end{bmatrix},\ \mathcal{B} := B_Le_1,\\
    &\mathcal{Q}(t) := \diag\Big\{\frac{Q_L}{2},-\frac{B_F^4}{\sigma_F^2 R_F^2}\lambda_L k(t),0\Big\},\
    \mathcal{C} := \sigma_Le_1,\ M_T := -Q_{L, T} F(T)e_1,
\end{align*}
where \(e_1 = (1, 0, 0)\transpose \in\R^3\) denotes the first standard basis vector. 
\end{proposition}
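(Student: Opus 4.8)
The plan is to reduce the leader's problem, restricted to the subclass~\eqref{eqn:aug_info}, to a classical finite-horizon linear-quadratic (LQ) stochastic control problem for the augmented state $\Psi=(X^L,Y,Z)$, and then solve that LQ problem by dynamic programming together with a verification argument. \textbf{Step 1 (rewriting the Fisher-information term through $\Psi$).} Since $h(s)=Q_Fe^{\int_0^s f}$, one has $Q_Fe^{\int_t^s f(u)\ud u}=h(s)e^{-\int_0^t f(u)\ud u}$, so~\eqref{eqn:g} and $\int_0^t h(s)X^L_s\ud s=-Y_t$ give $g(t,X^L)=-e^{-\int_0^t f}\int_t^T h(s)X^L_s\ud s=e^{-\int_0^t f}(Y_T-Y_t)$, hence $g^2(t,X^L)=k(t)(Y_T-Y_t)^2$. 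Expanding the square and using $\int_0^T k(t)Y_t\ud t=Z_T$ yields
\begin{equation*}
  \int_0^T g^2(t,X^L)\ud t=\int_0^T k(t)Y_t^2\ud t+\Big(\int_0^T k(s)\ud s\Big)Y_T^2-2Y_TZ_T .
\end{equation*}
Adding the primary cost~\eqref{eqn:leader_primary}, $J^L_I$ equals the expectation of a running cost $\psi\transpose\mathcal{Q}(t)\psi-Q_LF(t)e_1\transpose\psi+\tfrac{R_L}{2}(u^L)^2$ plus a terminal cost $\psi\transpose L_T\psi+M_T\transpose\psi$, each up to a deterministic constant, with $\mathcal{Q},L_T,M_T$ exactly those in the statement (a short bookkeeping check identifies the $(2,2)$ and $(2,3)$ entries of $L_T$ with the coefficients of $Y_T^2$ and $Y_TZ_T$ above, and $M_T$ with the linear part of $\tfrac{Q_{L,T}}{2}(X^L_T-F(T))^2$). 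By~\eqref{eqn:Y_Z}, $\ud\Psi_t=(\mathcal{A}(t)\Psi_t+\mathcal{B}u^L_t)\ud t+\mathcal{C}\ud W^L_t$, so the leader faces a standard Markovian LQ problem in $\Psi$.

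\textbf{Step 2 (HJB equation and quadratic-affine ansatz).} Next I would write the HJB equation for the value function, whose Hamiltonian minimizes over $u\in\R$ the convex (as $R_L>0$) quadratic $u\mapsto u\,\mathcal{B}\transpose\nabla V+\tfrac{R_L}{2}u^2$, with minimizer $u=-\tfrac1{R_L}\mathcal{B}\transpose\nabla V$. Substituting the ansatz $V(t,\psi)=\psi\transpose L_t\psi+M_t\transpose\psi+N_t$, so that $\nabla V=2L_t\psi+M_t$, $D^2V=2L_t$ and $\tfrac12\operatorname{tr}(\mathcal{C}\mathcal{C}\transpose D^2V)=\sigma_L^2(L_t)_{11}$ is a constant, and matching terms by degree in $\psi$ gives: the Riccati ODE~\eqref{eq:leader_ODE_system_L} for $L_t$ (quadratic terms), the linear ODE~\eqref{eq:leader_ODE_system_M} for $M_t$ (linear terms), and an ODE for $N_t$ (constants) that plays no role in the control. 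The minimizer then becomes $u^{L,*}_{\mathrm{sub}}(t,\psi)=-\tfrac1{R_L}\mathcal{B}\transpose(2L_t\psi+M_t)$, i.e.\ formula~\eqref{eqn:leader_opt_ctrl}.

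\textbf{Step 3 (verification).} Assuming~\eqref{eq:leader_ODE_system_L}--\eqref{eq:leader_ODE_system_M} admit continuous solutions on $[0,T]$, put $V(t,\psi)=\psi\transpose L_t\psi+M_t\transpose\psi+N_t$ and apply It\^{o}'s formula to $V(t,\Psi_t)$ along an arbitrary admissible control~\eqref{eqn:aug_info}. The linear-growth condition on $u^L_{\mathrm{sub}}$ and the linearity of the $\Psi$-dynamics give $\E\sup_{t\le T}\|\Psi_t\|^2<\infty$ by Gronwall, so the stochastic integral in the expansion is a true martingale; the HJB inequality then yields $J^L_I(u^L)\ge V(0,\Psi_0)$, with equality precisely for $u^L=u^{L,*}_{\mathrm{sub}}$, whose closed loop is again a linear SDE and hence admissible. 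This establishes optimality of $u^{L,*}_{\mathrm{sub}}$ within the subclass~\eqref{eqn:aug_info}.

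\textbf{Main obstacle.} The delicate point is the global solvability of the Riccati ODE~\eqref{eq:leader_ODE_system_L} on all of $[0,T]$: because the Fisher-information term contributes a negative weight on $Y_t^2$ and $L_T$ is indefinite, this is not a convex LQ problem, so the usual convexity/coercivity arguments that exclude finite-time blow-up of $L_t$ do not apply. Establishing existence — for instance by exploiting the triangular structure of $\mathcal{A}(t)$, $\mathcal{Q}(t)$ and $L_T$ (the control and the $X^L$-drift involve only the first coordinate), or by bounding the inference intensity $\lambda_L$ — is where the genuine effort lies; the remaining steps are the routine LQ computation outlined above.
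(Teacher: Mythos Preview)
Your proposal is correct and follows essentially the same route as the paper: rewrite $\int_0^T g^2(t,X^L)\ud t$ in terms of $Y_T,Z_T$ and $\int_0^T k(t)Y_t^2\ud t$, recognize the resulting problem as an LQ problem for the augmented state $\Psi$, write the HJB equation, minimize over $u$, and collect coefficients under the quadratic-affine ansatz to obtain~\eqref{eq:leader_ODE_system_L}--\eqref{eq:leader_ODE_system_M}. Your Step~3 (verification) and the explicit identification of the Riccati solvability issue as the main obstacle go slightly beyond the paper's proof, which simply invokes the DPP and defers well-posedness to Theorem~\ref{thm:leader_ODE_system}.
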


    Denote by \(X^{L,*}\) the leader's optimal state process induced by \(u^{L,*}_{\mathrm{sub}}\) in equation~\eqref{eqn:leader_opt_ctrl}, which follows the dynamics:
    \begin{equation*}
        \ud X_t^{L,\ast} = \bigg( \Big(A_L - \frac{2B_L^2}{R_L}L_{11}(t)\Big)X_t^{L,\ast} + \int_0^t \phi(t,s) X_s^{L,\ast} \ud s - \frac{B_L^2}{R_L}M_1(t)\bigg) \ud t + \sigma_L \ud W^L_t,
    \end{equation*}
    where $\phi(t,s) := \frac{2 B_L^2}{R_L}h(s)\Big(L_{12}(t) + L_{13}(t)\int_s^t k(u)\ud u\Big)$ for $0\leq s\leq t\leq T$.
    The Volterra kernel \(\phi\) demonstrates the path-dependent nature of \(X^{L,*}\).

The existence of the leader's optimal control \(u^{L,*}_{\mathrm{sub}}\) depends on the well-posedness of the ODEs~\eqref{eq:leader_ODE_system_L}--\eqref{eq:leader_ODE_system_M}.
Because of the indefinite structure of the ODE governing $L_t$, global well-posedness is hard to achieve. 
Instead, we provide a local existence result in Theorem~\ref{thm:leader_ODE_system}, the proof of which is left to Appendix~\ref{sec:appendix_b}.

\begin{theorem}
\label{thm:leader_ODE_system}
    The ODE system~\eqref{eq:leader_ODE_system_L}--\eqref{eq:leader_ODE_system_M} admits a unique solution \([0,T]\), provided that 
    \begin{equation*}
        T < \frac{1}{\sqrt{\beta q}} \arctan\left(\sqrt{\frac{q}{\beta }}\frac{1}{y_0}\right),
    \end{equation*}
   where the constants are defined as
    \begin{align*} 
    &q := \max\Big\{\Big|\frac{Q_L}{2} - |A_L| - \|h\|_\infty\Big|, \Big(\frac{B_F^4}{\sigma_F^2R_F^2}\lambda_L + 1\Big) \|k\|_\infty\Big\},\\
    &\beta := \max\Big\{\frac{2}{R_L}B_L^2 + |A_L|,\|h\|_\infty,\|k\|_\infty\Big\}, \quad y_0 := \max\Big\{\frac{Q_{L, T}}{2},\frac{B_F^4}{\sigma_F^2R_F^2}\lambda_L (\|k\|_1 + 1)\Big\}.
    \end{align*}
\end{theorem}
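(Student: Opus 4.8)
The plan is to regard~\eqref{eq:leader_ODE_system_L}--\eqref{eq:leader_ODE_system_M} as a backward-in-time system, solve the Riccati equation for $L_t$ first via a local-existence-plus-no-blow-up argument, and then treat the $M_t$-equation as an auxiliary linear ODE once $L_t$ is in hand. Since $f$ is continuous on $[0,T]$, the coefficients $h,k$ — and hence the matrices $\mathcal{A}(\cdot)$ and $\mathcal{Q}(\cdot)$ — are continuous and bounded there, and the right-hand side of~\eqref{eq:leader_ODE_system_L} is a quadratic matrix polynomial in $L$ with these continuous coefficients, hence locally Lipschitz in $L$ uniformly on $[0,T]$. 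First I would invoke the Picard--Lindel\"of theorem (with time reversed so that the terminal data at $t=T$ act as initial data) to obtain a unique maximal solution on a left-neighborhood of $T$ that either reaches $t=0$ or blows up, i.e.\ $\limsup_{t\downarrow t^\ast}\|L_t\|=\infty$ for some $t^\ast\in[0,T)$.

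Next I would rule out blow-up on $[0,T]$ by a scalar Riccati comparison. Setting $\tau:=T-t$ and $\widetilde L(\tau):=L(T-\tau)$ turns~\eqref{eq:leader_ODE_system_L} into the forward equation $\widetilde L' = \widetilde L\mathcal{A}+\mathcal{A}\transpose\widetilde L-\tfrac{2}{R_L}\widetilde L\mathcal{B}\mathcal{B}\transpose\widetilde L+\mathcal{Q}$ with $\widetilde L(0)=L_T$. For $y(\tau):=\|\widetilde L(\tau)\|$, I would bound $\tfrac{d}{d\tau}y$ term by term: using $\|\mathcal{B}\mathcal{B}\transpose\|=B_L^2$ the quadratic term contributes at most $\tfrac{2B_L^2}{R_L}y^2$, while the linear drift $\widetilde L\mathcal{A}+\mathcal{A}\transpose\widetilde L$ together with the forcing $\mathcal{Q}$, grouped according to the sparse structure of $\mathcal{A}$ and $\mathcal{Q}$, produce the remaining contributions that assemble into a differential inequality of the form $y'\le\beta y^2+q$ with $\beta,q$ exactly as in the statement; I would also check $y(0)=\|L_T\|\le y_0$ by bounding the spectral radius of the explicit $3\times3$ matrix $L_T$. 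The comparison ODE $\bar y'=\beta\bar y^2+q$, $\bar y(0)=y_0$, is separable, and its maximal solution exists on $[0,\tau^\ast)$ with $\tau^\ast=\tfrac{1}{\sqrt{\beta q}}\arctan\!\bigl(\sqrt{q/\beta}\,/y_0\bigr)$; under the hypothesis $T<\tau^\ast$, the comparison principle gives $y(\tau)\le\bar y(\tau)$ on $[0,T]$, so $\|L_t\|$ stays bounded and the maximal solution extends uniquely to all of $[0,T]$.

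Finally, with $L$ known and bounded on $[0,T]$ and $F$ continuous, equation~\eqref{eq:leader_ODE_system_M} is linear in $M$ with continuous, bounded coefficients and continuous forcing, so it admits a unique solution on all of $[0,T]$ by the standard theory of linear ODEs, with no blow-up possible. The hard part will be the a priori bound in the second step: a crude estimate through $\|\mathcal{A}\|$ and $\|\mathcal{Q}\|$ does not reproduce the stated constants — in particular the combination $\bigl|\tfrac{Q_L}{2}-|A_L|-\|h\|_\infty\bigr|$ inside $q$, which reflects a sign cancellation between the forcing and the linear drift — so one must pair the linear-drift contributions with the appropriate diagonal entries of $\mathcal{Q}$ before taking norms and track signs carefully; likewise, verifying $\|L_T\|\le y_0$ requires the explicit $2\times2$ eigenvalue estimate $\tfrac{1}{2}\bigl(a+\sqrt{a^2+4b^2}\bigr)\le a+b$ rather than a generic norm bound.
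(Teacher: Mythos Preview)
Your outline is sound in architecture --- local existence for the quadratic matrix equation~\eqref{eq:leader_ODE_system_L}, an a priori bound to preclude blow-up, and then global solvability of the linear equation~\eqref{eq:leader_ODE_system_M} --- and both the blow-up time of the scalar comparison $\bar y' = \beta\bar y^2 + q$ and the bound $\|L_T\| \le y_0$ via the $2\times2$ eigenvalue estimate are correct. The genuine gap is in the a priori bound itself. The scalar inequality your plan requires,
\[
\Bigl\|\widetilde L\mathcal A + \mathcal A\transpose\widetilde L - \tfrac{2}{R_L}\widetilde L\mathcal B\mathcal B\transpose\widetilde L + \mathcal Q\Bigr\| \le \beta\|\widetilde L\|^2 + q,
\]
is in general \emph{false} with the constants in the theorem. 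Already at $\widetilde L = 0$ it would force $\|\mathcal Q(t)\| \le q$; but $\|\mathcal Q(t)\| \ge \tfrac{Q_L}{2}$, whereas $q$ can equal $\bigl|\tfrac{Q_L}{2} - |A_L| - \|h\|_\infty\bigr|$, which is arbitrarily small when $\tfrac{Q_L}{2} \approx |A_L| + \|h\|_\infty$ and $\|k\|_\infty$ is small. So no ``pairing of linear drift with $\mathcal Q$ before taking norms'' can manufacture these constants: the cancellation encoded in $q$ is a cancellation of \emph{sign} in the Loewner (PSD) order, not in the spectral norm. A direct norm argument would prove a weaker theorem with different constants, but not the one stated.

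The paper's proof exploits exactly this. It runs a two-sided \emph{matrix} Riccati comparison: an upper bound $L_t \le X(t)$ by dropping the negative-semidefinite quadratic term (so $X$ obeys a linear ODE and exists globally), and a lower bound $L_t \ge Y(t)$ where $Y$ solves the simplified Riccati $\dot Y = -Q_Y + Y S_Y Y$ for diagonal $Q_Y,S_Y$ with $\|Q_Y\| = q$ and $\|S_Y\| = \beta$. The comparison $L \ge Y$ follows from the standard Riccati monotonicity theorem once one checks that
\(
\bigl[\begin{smallmatrix} \mathcal Q - Q_Y & \mathcal A\transpose \\ \mathcal A & S_Y - \tfrac{2}{R_L}\mathcal B\mathcal B\transpose \end{smallmatrix}\bigr] \ge 0,
\)
which is verified by diagonal dominance (Gershgorin). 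Because the auxiliary equation for $Y$ has \emph{no linear term}, the clean estimate $\|\widetilde Y'\| \le q + \beta\|\widetilde Y\|^2$ is now immediate, giving $\|Y(T-t)\| \le y(t)$ and hence existence of $Y$ on $[0,T]$ under the stated bound on $T$. The PSD sandwich $Y \le L \le X$ bounds all eigenvalues of $L$ and closes the continuation argument. In short, the matrix comparison is precisely the device that trades the linear term in~\eqref{eq:leader_ODE_system_L} against adjustments to the quadratic and constant coefficients --- a trade that cannot be executed at the level of operator norms alone.
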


We remark that, the existence interval in Theorem~\ref{thm:leader_ODE_system} is far from maximal. Numerical experiments in Section~\ref{sec:numerics} indicate that the ODE system may remain well-posed beyond the theoretical bound.

\subsection{Extension to Multi-Period Interactions}\label{subsec:repeated_experiments}
As an extension of the Stackelberg game proposed above, we consider \(N\) consecutive episodes of leader-follower interactions, where the information accumulated across episodes enhances the inference of $M$.
Throughout, superscripts \((i)\) denote quantities within the \(i\)-th episode, where \(i\in[N]\).

The multi-period game unfolds over the time horizon \([0,NT]\), partitioned into \(N\) episodes of length \(T\).
In the first episode \([0,T]\), \(X^{L,*,(1)}\) and \(X^{F,*,(1)}\) are realized and the corresponding MLE \(\widehat M^{(1)}\) is calculated.
In the subsequent episode \([T,2T]\), both agents employ the same optimal strategies \(u^{L,*},\pi^{F,*}\) as before.
Their state processes, initialized as \(X^{L,*,(2)}_0 = X^{L,*,(1)}_T\) and \(X^{F,*,(2)}_0 = X^{F,*,(1)}_T\), evolve under independently resampled Brownian motions, producing a new estimate \(\widehat{M}^{(2)}\). This procedure repeats over \(N\) consecutive episodes, generating MLEs \(\widehat M^{(1)},\ldots, \widehat M^{(N)}\) (cf. equation~\eqref{eq:MLE_estimator}). Operationally, each period of duration \(T\) corresponds to the leader executing its control, observing the follower’s response, performing inference, and then initiating a new episode.

Each \(\widehat M^{(i)}\) contains information about \(M\), motivating the construction of a more efficient aggregated estimator:
\begin{equation}
    \label{eqn:period_MLE}
    \overline{M}_N :=  \sum_{i=1}^{N} w^N_i \widehat{M}^{(i)}, \quad w^N_i :=  \frac{\int_0^T g^2(t, X^{L,*}_{[(i-1)T, iT]}) \ud t}{\sum_{i=1}^N \int_0^T g^2(t, X^{L,*}_{[(i-1)T, iT]}) \ud t}.
\end{equation}
This estimator $\overline{M}_N$ remains unbiased and is more efficient than any single one \(\widehat{M}^{(i)}\):
\begin{equation*}
    \Var[\overline{M}_N] = \frac{\sigma_F^2R_F^2}{B_F^4} \E\Bigg[\frac{1}{\sum_{i=1}^N \int_0^T g^2(t, X^{L,*}_{[(i-1)T, iT]}) \ud t}\Bigg]\leq \Var[\widehat M^{(i)}],\ \forall i\in[N].
\end{equation*}

Additionally, the estimator \(\overline{M}_N\) admits an online updates scheme with a natural stopping rule.
Given the current estimate \(\overline{M}_N\) and the latest observation \(\widehat{M}^{(N+1)}\), 
\begin{align*}
    \overline{M}_{N+1} = \frac{P_N}{P_{N+1}} \overline M_N  + \Big(1 - \frac{P_N}{P_{N+1}}\Big) \widehat M^{(N+1)},\quad \Var[\overline{M}_{N+1}] = \frac{\sigma_F^2R_F^2}{B_F^4} \E\Bigg[\frac{1}{P_{N+1}}\Bigg],
\end{align*}
where the precision $P_N := \sum_{i=1}^N \int_0^T g^2(t, X^{L,*}_{[(i-1)T, iT]}) \ud t$ can be incrementally updated after each episode. This formulation allows the leader to implement a stopping rule for \(N\), ensuring that the total estimation variance remains below a prescribed threshold.


\section{Numerical Algorithms}\label{sec:numerics}
In this section, we develop machine learning algorithms to solve the strategic inference model, providing an intuitive illustration of the leader-follower interactions. Specificallly, 
Section~\ref{subsec:optimal_control_comparison} compares the leader’s optimal state and control trajectories under two objectives, while Section~\ref{subsec:estimator_behavior_validation} evaluates the performance of the MLE $\widehat{M}$ and its multi-period extension $\overline{M}_N$, both computed at the final estimation stage.

Due to the path-dependent nature of the leader's objective, we parameterize \(u^L\) using a recurrent neural network (RNN).
Unlike feedforward architectures, RNNs are naturally suited for sequential data and can capture temporal dependencies of variable length.
Specifically, we adopt a long short-term memory (LSTM) network~\cite{hochreiter1997long}, which mitigates vanishing gradients and effectively models long-term dependencies. 

To solve leader's both optimization formulations, we apply the \emph{direct parameterization (DP)} algorithm \cite{han2016deep,HanHu2021}.
Within each of the \(N_{\mathrm{epoch}}\) training epochs, the algorithm simulates \(N_{\mathrm{batch}}\) state trajectories of the leader to approximate the expected cost~\eqref{eq:leader_var_objective_func}--\eqref{eq:leader_fisher_objective_func}, which serves as the loss function for neural training.

\smallskip
\noindent\textbf{Numerical Implementations\footnote{All experiments are implemented in \texttt{PyTorch} and are executed on an Nvidia GeForce RTX 2080 Ti GPU. Code is available upon request.
}.} We discretize the time horizon \([0,T]\) into \(N_T\) sub-intervals of equal length \(h:=T/N_T\), and denote the discrete grid by \(\Delta:=\{kh:k\in\{0,1,\ldots,N_T-1\}\}\). We approximate the leader's state~\eqref{eq:leader_dynamics}  by the Euler scheme:
\begin{equation*}
     \tilde{X}^L_{t+h} = \tilde{X}^L_t + \Big(A_L \tilde{X}_t^L + B_L u^L_{\mathrm{NN}}(t,\tilde{X}^L_{[0,t]}) \Big) h + \sigma_L \sqrt{h}\xi^L_t,\ \forall t\in\Delta,
\end{equation*}
where \(\{\xi^L_t\}\overset{\mathrm{i.i.d.}}{\sim} \mathcal{N}(0,1)\), and $u^L_{\mathrm{NN}}$ denotes the network output.

At each time step \(t\in\Delta\), the LSTM receives as inputs the simulated leader state \(\tilde{X}_t^L\), a weighted vector of the three most recent states \((\gamma^3 \tilde{X}_{t-3h}^L, \gamma^2 \tilde{X}_{t-2h}^L, \gamma \tilde{X}_{t-h}^L)\) with a decay factor \(  \gamma\in(0,1] \), their unweighted counterpart \((\tilde{X}_{t-3h}^L, \tilde{X}_{t-2h}^L, \tilde{X}_{t-h}^L)\), and timestamps \((t/T, \sin(\pi t/T), \cos(\pi t/T))\). The output of the LSTM is passed through two fully connected layers, producing \(u^L_{\mathrm{NN}}(t,\tilde{X}^L_{[0,t]}) \in \R \), which approximates the control value at time \(t\).
To ensure stability of the hidden state, we include a warm-up step by setting \( \tilde{X}_{-3h}^L = \tilde{X}_{-2h}^L = \tilde{X}_{-h}^L = \tilde{X}_0^L \) at three fictitious negative times \(-3h, -2h, -h \). 
This architecture follows the parameterization in~\cite{HanHu2021}, which has been validated to provide robust numerical performance.


\smallskip
\noindent\textbf{Hyperparameters.}
The LSTM layer contains \(64\) hidden units, followed by two fully connected layers of widths $64$ and $32$. The learning rate is initialized at $0.001$ and reduced by a factor of \(0.5\) after \(1500\) and \(5000\) epochs, respectively. All network parameters are updated using the Adam optimizer. Unless otherwise specified, the following hyperparameters are used: \(N_T = 50,\ N_{\mathrm{batch}} = 512,\ N_{\mathrm{epoch}} = 10000\). For Figure~\ref{fig:primary_info_tradeoff}, we use a finer time discretization \(N_T = 500\) for better visualizations of the tracking behavior.

\smallskip
\noindent\textbf{Model Parameters.} 
All numerical results are based on the following configuration:
\begin{align*}
    &T = 0.5,\quad A_L=A_F=-1.0,\quad B_L=B_F=1.0,\quad \sigma_L=\sigma_F=0.1,\quad X^L_0 = X^F_0 = 0.1,\\
    &Q_L = Q_F = Q_{L, T} = R_L = R_F = \lambda_F = 1.0,\quad M = 1.0,\quad F(t) = 0.1 \sin(2 \pi t/T).
\end{align*}
The parameter $\lambda_L$, which quantifies the strength of strategic inference, is varied across experiments and specified separately for each case.

 Although this parameter set may violate the condition in Theorem~\ref{thm:leader_ODE_system}, numerical verification confirms that the ODE system~\eqref{eq:leader_ODE_system_L}–\eqref{eq:leader_ODE_system_M} remains well-posed on \([0,T]\).

\subsection{Variance Minimization \emph{vs.} FI Maximization}\label{subsec:optimal_control_comparison}

We numerically compare the leader's optimal states and controls obtained by solving the variance minimization formulation~\eqref{eq:leader_var_objective_func} and the FI maximization formulation~\eqref{eq:leader_fisher_objective_func}, highlighting their connection and trade-offs.

\begin{figure}
    \centering
    \includegraphics[width=1.0\linewidth]{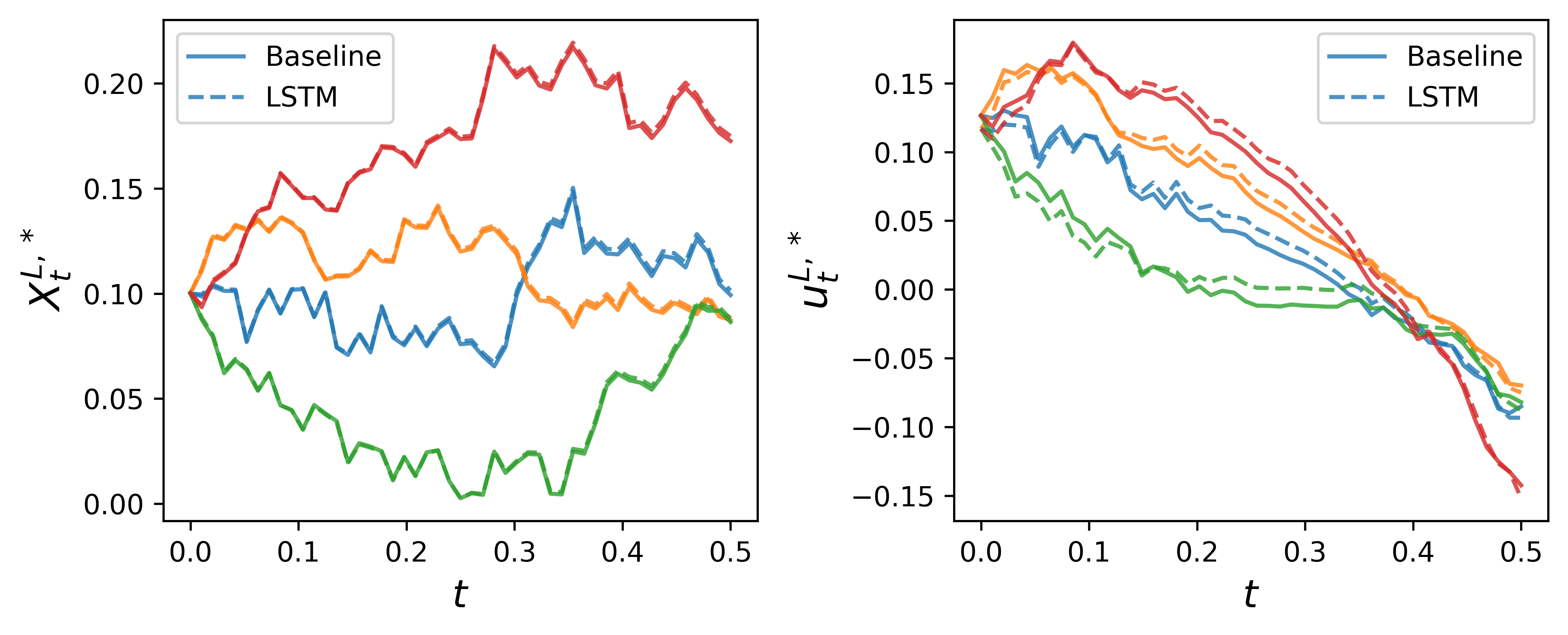}
    \vspace*{-1.5em}
    \caption{
    Leader's optimal state trajectories (left) and control trajectories (right) for the FI maximization formulation~\eqref{eq:leader_fisher_objective_func} with \(\lambda_L = 0.5\).  
    Solid lines represent baseline trajectories from Proposition~\ref{prop:leader_optimal_control}, while dashed lines denote LSTM-based approximations.  Different colors correspond to different sample paths of $W^L$. 
    }
    \label{fig:FIM_NN_analy}
\end{figure}

As a sanity check for our numerical implementations, Figure~\ref{fig:FIM_NN_analy} compares the leader's optimal trajectories for the FI maximization formulation~\eqref{eq:leader_fisher_objective_func} generated by \(u^{L,*}_{\mathrm{sub}}\) in Proposition~\ref{prop:leader_optimal_control}, with the LSTM-approximated ones under identical sample paths of $W^L$.
The close alignment between the two confirms the numerical accuracy of the algorithm and suggests that \(u^{L,*}_{\mathrm{sub}}\) remains near-optimal under the general information structure~\eqref{eqn:leader_info}, even though its optimality has only been established theoretically within the restricted subclass~\eqref{eqn:aug_info}.

\begin{figure}
    \centering
    \includegraphics[width=1.0\linewidth]{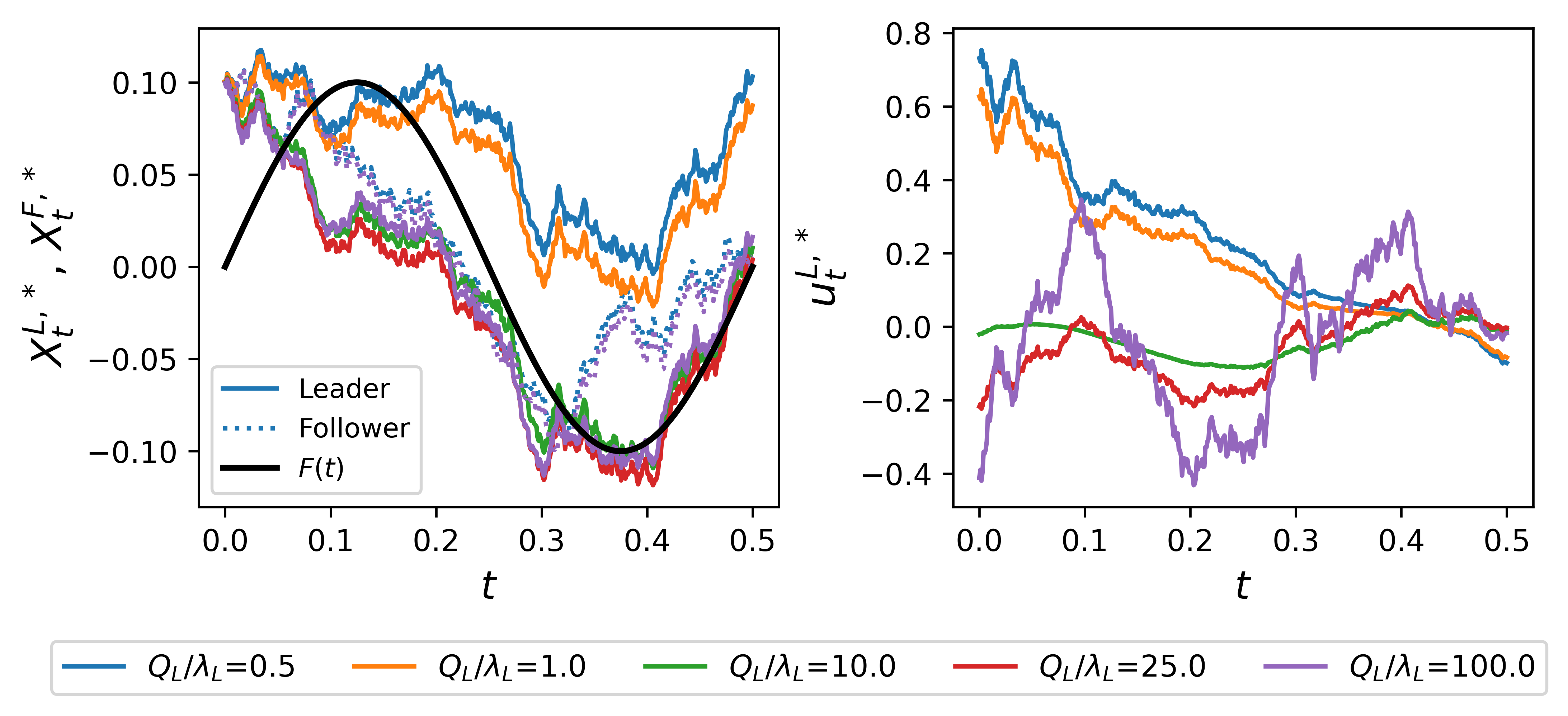}
    \vspace*{-1.5em}
    \caption{
    Leader/follower's optimal state trajectories (left) and leader's optimal control trajectories (right) under the FI maximization formulation~\eqref{eq:leader_fisher_objective_func} under different values of \(Q_L/\lambda_L\).
    Solid lines denote the leader's trajectories, and dotted lines indicate the follower's trajectories.  All leader trajectories are generated under the baseline control \(u^{L,*}_{\mathrm{sub}}\) from Proposition~\ref{prop:leader_optimal_control}, sharing the same sample path of $W^L$. The follower trajectories are plotted only for $Q_L/\lambda_L = 0.5$ and $100.0$ due to minimal variation across cases.
    A finer time discretization $N_T=500$ is adopted for better visualizations of the tracking behavior. 
    }
  \label{fig:primary_info_tradeoff}
\end{figure}

Figure~\ref{fig:primary_info_tradeoff} illustrates the trade-off between the leader's fulfillment of the primary task \textit{vs.} strategic inference, characterized by the ratio $Q_L/\lambda_L$. A large ratio indicates stronger emphasis on tracking \(F(t)\), while a small ratio prioritizes information extraction about \(M\).
As the ratio increases from \(0.5\) to \(1\), \(10\), \(25\), \(100\), the leader's optimal state trajectories increasingly align with the target $F(t)$, while the Fisher Information \(I(M)\) (averaged over \(10000\) sample paths) decreases from \(0.146\) to \(0.112\), \(0.015\), \(0.004\), \(0.001\).
On the follower's side, \(X^{F,*}_t\) exhibits little change, since it primarily tracks the leader’s realized trajectory.

\begin{figure}
    \centering
    \includegraphics[width=1.0\linewidth]{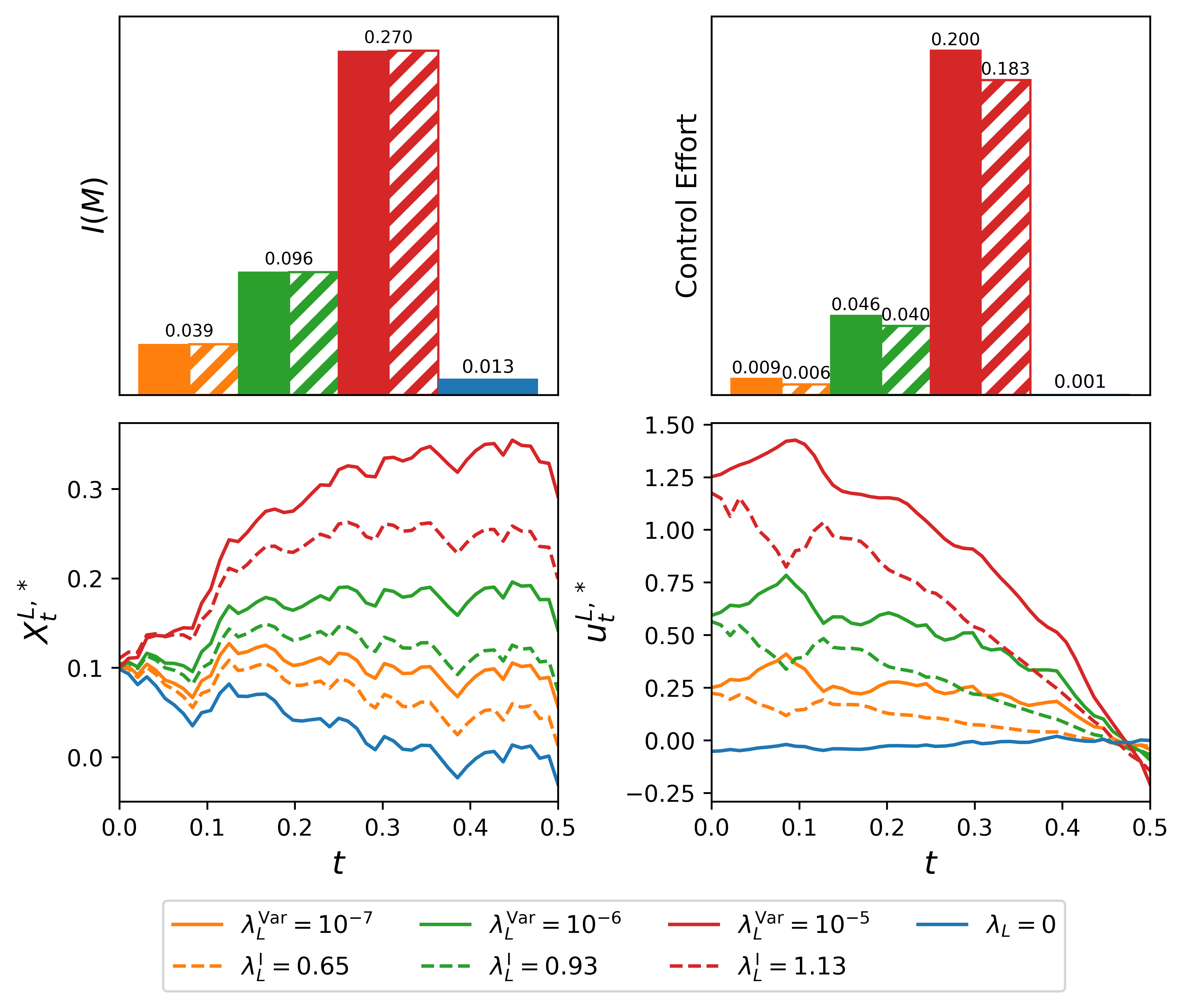}
    \vspace*{-1.5em}
    \caption{Comparisons of FI (top left), control effort (top right), and optimal trajectories (bottom) under the variance minimization~\eqref{eq:leader_var_objective_func} and FI maximization~\eqref{eq:leader_fisher_objective_func} formulations.
    Lines/bars in the same color correspond to intensity tuples \((\lambda_L^\Var, \lambda_L^{I})\) in~\eqref{eqn:tuple_intensity}, for which $I(M)$ is approximately aligned.
    Each value in the top panels is estimated from \(10000\) sample paths. 
    In the bottom panels, solid and dashed lines represent optimal trajectories under variance minimization~\eqref{eq:leader_var_objective_func} and FI maximization~\eqref{eq:leader_fisher_objective_func}, respectively, sharing the same sample path of \(W^L\).
    }
    \label{fig:Var_FIM_cnrl_comparison}
\end{figure}


Figure~\ref{fig:Var_FIM_cnrl_comparison} compares the two formulations across different values of \(\lambda_L\), bridging the two formulations of $J_\Var^L$ and $J_I^L$. Shared quantities are labeled with superscripts $\Var$ and $I$ for clarity.
To ensure fair comparison, we choose tuples:
\begin{equation}
    \label{eqn:tuple_intensity}
    (\lambda_L^\Var, \lambda_L^{I}) \in \{(10^{-7},0.65),(10^{-6},0.93),(10^{-5},1.13)\},
\end{equation}
under which the resulting Fisher Information $I(M)$ is closely aligned under optimal controls.
With identical levels of $I(M)$, the FI maximization formulation uses slightly less control effort, measured by $\E\int_0^T \big(u_t^{L,*}\big)^2 \ud t$.
Notably, the two formulations generate distinct control profiles, and the required intensities differ by several orders of magnitude \(\lambda_L^\Var \ll \lambda_L^{I}\).
Moreover, the optimal control for $J_I^L$ is more sensitive to variations in $\lambda_L$; whereas $J_\Var^L$ requires significantly larger adjustments in $\lambda_L$ to produce comparable variations in $I(M)$.

\subsection{Performance for $\widehat M$}\label{subsec:estimator_behavior_validation}
We next evaluate the performance of the estimators of \(M\) derived at the final inference stage of the game. Figure~\ref{fig:single_period} compares the conditional bias and conditional variance of the MLE~\eqref{eq:MLE_estimator} among different values of \(\lambda_L\), using a fixed leader trajectory \(x^{L,*}\) generated under the FI maximization formulation~\eqref{eq:leader_fisher_objective_func}. The left panel confirms the conditional unbiasedness of \(\widehat M\), consistent with the expected Monte Carlo convergence rate. 
The right panel shows that the conditional variance decreases sharply with increasing $\lambda_L$, demonstrating the effectiveness of strategic inference in improving estimation precision.

\begin{figure}
    \centering
    \includegraphics[width=1.0\linewidth]{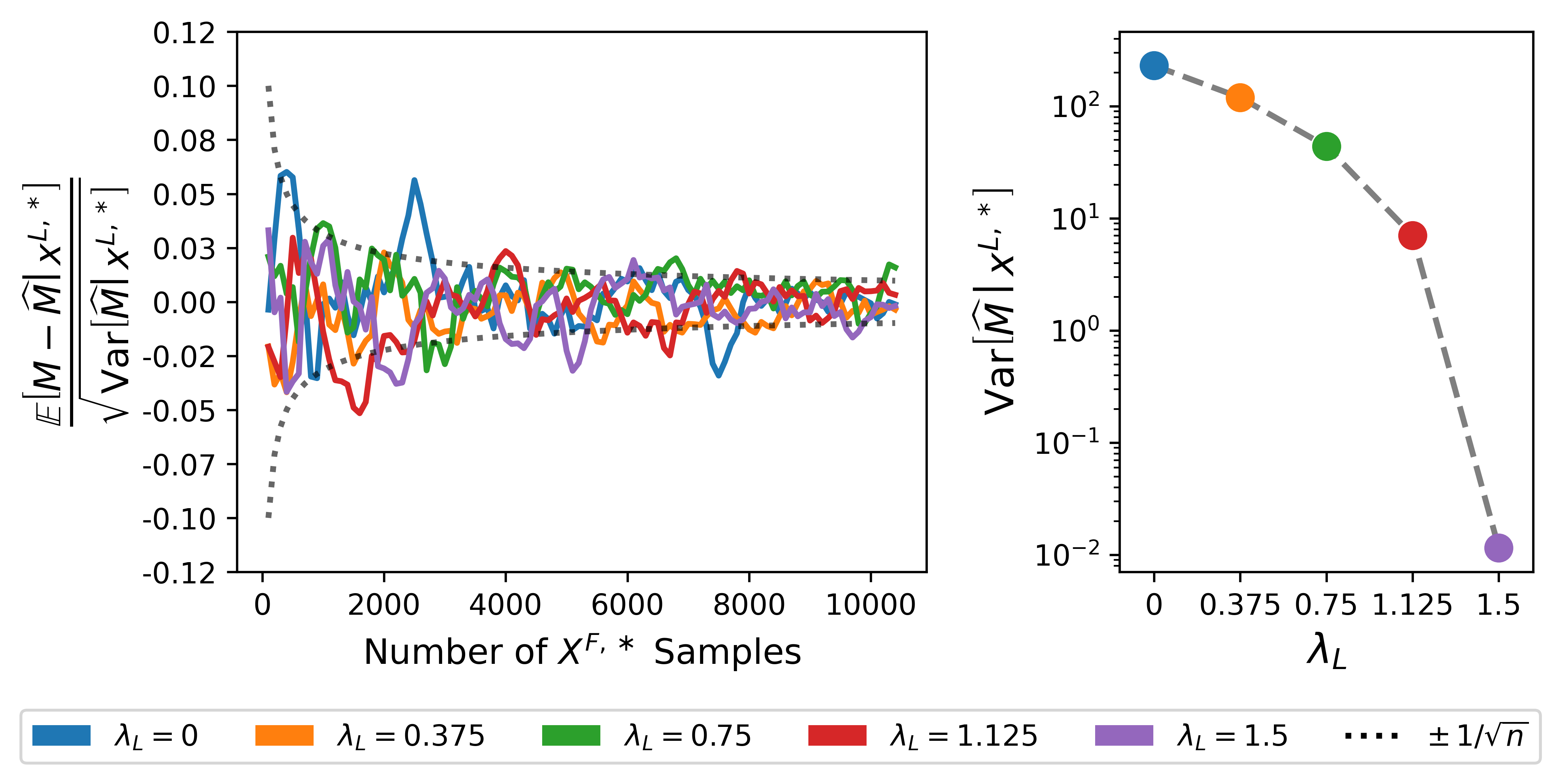}
    \vspace*{-1.5em}
    \caption{
    Conditional bias (left) and conditional variance (right) of the MLE \(\widehat M\) under different values of \(\lambda_L\).  
    The leader's trajectories \(x^{L,*}\) are generated using the FI maximization formulation~\eqref{eq:leader_fisher_objective_func} with the control \(u^{L,*}_{\mathrm{sub}}\) from Proposition~\ref{prop:leader_optimal_control}, sharing the same sample path of \(W^L\). A three-point moving average is applied to the bias plot for readability.
    }
    \label{fig:single_period}
\end{figure}

\begin{figure}
    \centering
    \includegraphics[width=1.\linewidth]{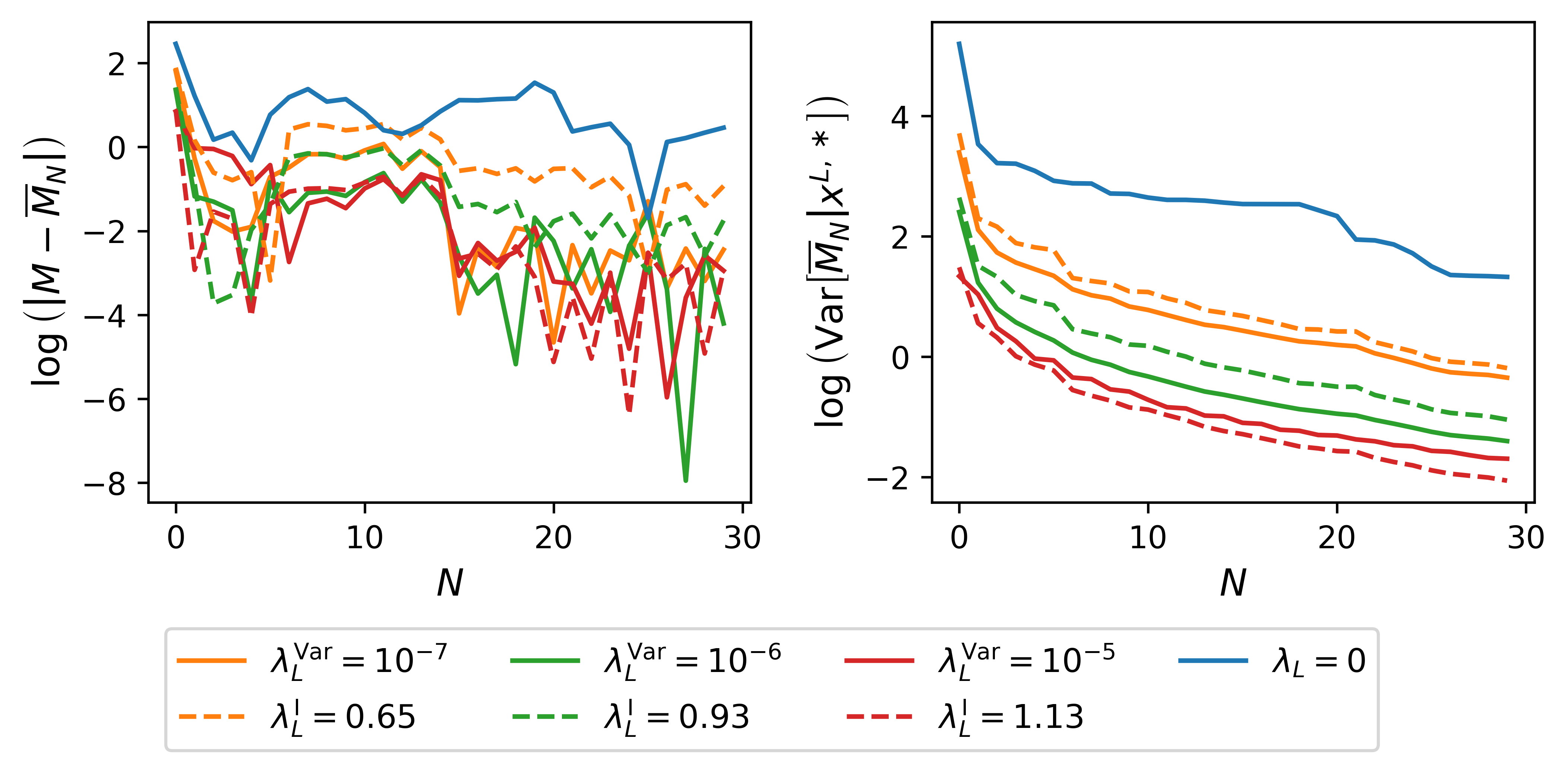}
    \vspace*{-1.5em}
    \caption{
    Inference error (left) and conditional variance (right) of the multi-period estimator~\(\overline{M}_N\)~\eqref{eqn:period_MLE} under the extended formulations~\eqref{eq:leader_var_objective_func}--\eqref{eq:leader_fisher_objective_func}. 
    Solid (resp.~dashed) lines represent trajectories generated by the optimal control from the variance minimization~\eqref{eq:leader_var_objective_func} ((resp.~FI maximization~\eqref{eq:leader_fisher_objective_func}) formulation. 
    Lines with the same color are associated with tuples \((\lambda_L^\Var, \lambda_L^{I})\) in~\eqref{eqn:tuple_intensity}, for which $I(M)$ is aligned across formulation.
    The leader's (resp. follower's) trajectories share the same sample path of \(W^L\) (resp. $W^F$).
    Both quantities are plotted on a log scale for clarity.
    }
    \label{fig:multi_period}
\end{figure}

Finally, Figure~\ref{fig:multi_period} evaluates the inference error and conditional variance of the multi-period estimator \(\overline{M}_N\) (cf. Section~\ref{subsec:repeated_experiments}).
For a fair comparison, we use the intensity tuples from~\eqref{eqn:tuple_intensity} to align the FI under optimal controls.  
On the left panel, each MLE $\overline{M}_N$ is generated from a single leader and follower trajectory within each period.
We observe that larger values of \(\lambda_L\) substantially reduce the inference error, by several orders of magnitude, whereas in the absence of information acquisition ($\lambda_L = 0$),  the error remains high even after $N=30$ periods.
The variance of $\overline{M}_N$ decreases monotonically with the number of periods $N$, with a faster decay for larger $\lambda_L$ weights.  

\section{Conclusions and Future Work}\label{sec:conclusions}
In this paper, we proposed a continuous-time Stackelberg game to model strategic inference between a leader and a follower. Anticipating the follower’s optimal tracking response, the leader designs its control to simultaneously accomplish a primary task and extract maximal information about the follower's latent preference parameter \(M\). This game-theoretic formulation offers a new perspective on inverse learning via stochastic experimental design.

Several directions remain open for future research. One natural extension is to higher-dimensional settings with discrete-time observations, and to games involving multiple homogeneous agents. In addition, inferring other model parameters such as \(Q_F\), \(R_F\), or jointly estimating them with \(M\), would broaden the applicability of our framework. Beyond these, our current model leverages a linear-quadratic-Gaussian structure, under which the Fisher information is independent of \(M\), allowing for tractable optimization. Extending to more general dynamics or distributional families would enhance realism but requires fundamentally new techniques. Another important direction is to incorporate partial observability and adversarial deception: when the leader’s observations are noisy~\cite{hooker2015control}, or when the follower deliberately distorts its trajectory~\cite{zhou2025adversarial,zhou2025integrating}, inference becomes significantly harder. Finally, integrating agent heterogeneity, e.g., learning over networks with diverse preferences~\cite{caines2021graphon,hu2025finite}, would move the model closer to complex real-world environments.

\appendix

\section{Proofs of Proposition~\ref{prop:follower_optimal_control} and Theorem~\ref{thm:follower_ODE}}\label{sec:appendix_a}

This appendix provides proofs for Proposition~\ref{prop:follower_optimal_control} and Theorem~\ref{thm:follower_ODE}, both arising from the follower’s optimization problem discussed in Section~\ref{subsec:followers_problem}.

\begin{proof}[Proof of Proposition~\ref{prop:follower_optimal_control}]
By the dynamic programming principle (DPP), the follower's value function $V(t, x)$ satisfies the Hamilton-Jacobi-Bellman (HJB) equation
\begin{multline*}
        \partial_t V+ 
    \inf_{\pi_t^F}\Bigg\{\int_{\mathbb{R}} \Bigl[ B_F y \,\partial_xV + \frac{R_F}{2}y^{2}+\lambda_F\log\pi_t^F(y) \Bigr]\pi_t^F(y) \ud y \Bigg\}
       \\  + \frac{Q_F}{2}(x - Mx_t^L)^2 + A_F x \,\partial_x V + \frac{\sigma^{2}_F}{2}\partial_{xx}V = 0,
\end{multline*}
with terminal condition $V(T,x) = 0$.

Solving for the infimum (setting the first variation in $\pi_t^F$ as zero) yields
\begin{equation*}
    \pi_t^{F,*}(y|x) \propto e^{\frac{1}{\lambda_F}\bigl(-\frac{R_F}{2}y^{2}-B_Fy \,\partial_x V \bigr)},
\end{equation*}
which implies that $\pi_t^{F,*}(\cdot|x)=\mathcal N\Bigl(-\frac{B_F}{R_F}\partial_x V,\frac{\lambda_F}{R_F}\Bigr)$ is Gaussian.
Consider the quadratic ansatz $V(t,x)=a_tx^{2}+b_tx+c_t$, where $a,b,c$ are deterministic measurable functions. 
Plugging \(\pi^{F,*}\) back into the HJB equation and collecting coefficients yields the ODE system~\eqref{eq:follower_ODE_sys_a}, and \(c_t\) satisfies
\begin{equation*}
    \dot{c}_t = -\frac{Q_F M^2}{2}( x_t^L)^{2} + \frac{B_F^{2}}{2R_F}b^{2}_t
    -\sigma_F^{2}a_t
    + \frac{\lambda_F}{2}\log\Bigl(\frac{2\pi e\lambda_F}{R_F}\Bigr) - \frac{\lambda_F}{2},
\end{equation*}
with a given terminal condition \(c_T = 0\).
Note that the solution \(c_t\) exists whenever both \(a_t\) and \(b_t\) exist.
Plugging the ansatz into \(\pi^{F,*}\) yields the optimal policy~\eqref{eqn:F_opt_pi}, which concludes the proof.
\end{proof}

\begin{proof}[Proof of Theorem~\ref{thm:follower_ODE}]
    Since \(R_F,Q_F,a_T\) are all positive, by \cite[Theorem~4.1.6]{abou2012matrix}, \(a_t\) exists on \([0,T]\), for any \(T>0\).
    Given \(a_t\), the solution to the linear ODE in \(b_t\) with time-varying coefficients globally exists. This concludes the proof.
\end{proof}

\section{Proofs of Propositions~\ref{prop:MLE_calc},~\ref{prop:leader_optimal_control}, and Theorem~\ref{thm:leader_ODE_system}}\label{sec:appendix_b}

This appendix provides proofs for Proposition~\ref{prop:MLE_calc}, Proposition~\ref{prop:leader_optimal_control}, and Theorem~\ref{thm:leader_ODE_system}, all arising from the leader’s side discussed in Section~\ref{sec:leader_and_MLE}.

\begin{proof}[Proof of Proposition~\ref{prop:MLE_calc}]

Plugging dynamics~\eqref{eqn:opt_state} into the MLE~\eqref{eq:MLE_estimator} yields
\begin{equation}
    \label{eqn:MLE_form}
    \widehat{M} = M - \frac{\sigma_F\int_0^T g(t,X^L) \ud W_t^F}{\frac{B_F^2}{R_F}\int_0^T  g^2(t,X^L) \ud t}.
\end{equation}
By the Itô isometry, the conditional mean, variance, and FI are given by
\begin{align*}
    &\E[\widehat{M}|X^L] = M,\qquad  \Var[\widehat{M}|X^L] = \frac{\sigma_F^2}{\frac{B_F^4}{R_F^2}\int_0^T g^2(t, X^L) \ud t},\\
    &I(M|X^L) := \E \Big[(\partial_M \ell(X^{F,*},X^L;M))^2\Big|X^L\Big] = \frac{B_F^4}{\sigma_F^2R_F^2} \int_0^T g^2(t,X^L)\ud t.
\end{align*}
Taking expectations on both sides and using the conditional decomposition of variance yield equations~\eqref{eqn:MLE_var}--\eqref{eqn:MLE_FI}, which concludes the proof.
\end{proof}

\begin{proof}[Proof of Proposition~\ref{prop:leader_optimal_control}]
The definitions of $Y$ and $Z$~\eqref{eqn:Y_Z} yield the state dynamics:
$d\Psi_t=\mathcal{A}(t)\Psi_t \ud t + \mathcal{B}u_t^L \ud t + \mathcal{C} \ud W^L_t$. 
By equations~\eqref{eqn:g} and~\eqref{eqn:Y_Z},
\[
\int_{0}^{T}g^{2}(t,X^L)\ud t=Y_T^2 \int_0^T k(t)\ud t + \int_0^T k(t) Y_t^2 \ud t -2 Y_T Z_T.
\]
Combining with the primary task~\eqref{eqn:leader_primary}, the expected cost~\eqref{eq:leader_fisher_objective_func} becomes
\begin{multline*}
    J^L_I(u^L)=\E\Big[ \int_{0}^{T}\Big(\Psi_t\transpose \mathcal{Q}(t)\Psi_t - Q_L F(t)e_1\transpose \Psi_t + \frac{Q_L}{2}[F(t)]^2 + \frac{R_L}{2}(u_t^L)^{2}\Big)\ud t\\ + \Psi_T\transpose L_T\Psi_T - Q_{L,{T}}F(T)e_1\transpose \Psi_T +\frac{Q_{L,T}}{2}[F(T)]^2 \Big].
\end{multline*}

For this Markovian control problem, by the DPP, the leader's value function $V(t,\psi)$ satisfies the HJB equation 
\begin{multline*}
    \partial_t V + \inf_{u}\Big\{(\partial_\psi V)\transpose(\mathcal{A}(t)\psi+\mathcal{B}u) +\frac{R_L}{2}u^{2}\Big\}+\frac{1}{2} \mathrm{Tr} \big( \mathcal{C}\mathcal{C}\transpose \partial_{\psi\psi}V) + \psi\transpose \mathcal{Q}(t)\psi\\-Q_LF(t)e_1\transpose \psi + \frac{Q_L}{2}[F(t)]^2=0,
\end{multline*}
with a given terminal condition $V(T,\psi)=\psi\transpose L_T\psi -Q_{L, T}F(T) e_1\transpose \psi + \frac{Q_{L,T}}{2}[F(T)]^2.$ 

Solving for the infimum yields $u^{L, \ast}_{\mathrm{sub}}(t,\psi) =-\frac{1}{R_L}\mathcal{B}\transpose \partial_\psi V $.
Consider the ansatz $V(t,\psi) = \psi\transpose L_t \psi + M_t\transpose \psi + N_t$, where $L_t \in \mathbb{S}^{3\times3}$, $M_t \in \R^3$, $N_t \in \R$ are deterministic measurable functions.
Plugging back into the HJB equation and collecting coefficients yields the ODE system~\eqref{eq:leader_ODE_system_L}--\eqref{eq:leader_ODE_system_M}, with 
\begin{equation*}
    \dot{N}_t-\frac{1}{2R_L}\big(\mathcal{B}^\top M_t\big)^2 + \mathcal{C}\transpose L_t \mathcal{C} + \frac{Q_L}{2}[F(t)]^2 = 0,
\end{equation*}
given the terminal condition \(N_T = \frac{Q_{L, T}}{2} [F(T)]^2\).
Note that the solution \(N_t\) exists whenever both \(L_t\) and \(M_t\) exist.
Plugging the ansatz into \(u^{L,*}_{\mathrm{sub}}\) yields~\eqref{eqn:leader_opt_ctrl}, which concludes the proof.
\end{proof}

\begin{proof}[Proof of Theorem \ref{thm:leader_ODE_system}]
    Given the existence of \(L_t\), \(M_t\) satisfies a linear ODE with time-varying coefficients, whose solution uniquely exists.
    Hence, it suffices to prove that the solution $L_t$ uniquely exists on \([0,T]\), which is equivalent to establishing \textit{a priori} bounds for \(L_t\).

    We first derive a global upper bound for \(L_t\).
    Consider \(X(t)\) such that $\dot{X} = - \mathcal{A}\transpose(t) X-X\mathcal{A}(t) - \mathcal{Q}(t)$, with a given terminal condition $X(T) = L_T$.
    Since it is a linear ODE, \(X(t)\) exists on \([0,T]\).
    By the comparison principle of Riccati equations \cite[Theorem 4.1.4]{abou2012matrix}, we get $L_t \leq X(t)$.
    
    Next, we establish a lower bound for \(L_t\).  
    Define matrices
    \begin{align*}
        Q_Y &:= \diag\Big\{\tfrac{Q_L}{2} - |A_L| - \|h\|_\infty,-\Big(\frac{B_F^4}{\sigma_F^2R_F^2}\lambda_L + 1\Big) \|k\|_\infty,0\Big\},\\
        S_Y &:= \diag\Big\{\frac{2}{R_L}B_L^2 + |A_L|,\|h\|_\infty,\|k\|_\infty\Big\},
    \end{align*}
    and let $Y(t)$ satisfy $\dot{Y} = - Q_Y + Y S_Y Y$, with a given terminal condition $Y(T) = L_T$. 
    Since diagonally dominant symmetric matrices with positive diagonal entries are positive semi-definite (as a corollary of the Gershgorin circle theorem),
     \begin{equation*}        
        \begin{bmatrix}
            \mathcal{Q}(t) & \mathcal{A}\transpose(t)\\
            \mathcal{A}(t) & -\frac{2}{R_L}\mathcal{B}\mathcal{B}\transpose
        \end{bmatrix} \geq
        \begin{bmatrix}
            Q_Y & 0\\
            0 & -S_Y
        \end{bmatrix},\ \forall t\in[0,T].
    \end{equation*}
    By the comparison principle of Riccati equations \cite[Theorem 4.1.4]{abou2012matrix}, if $Y(t)$ exists on \([0,T]\), then $L_t\geq Y(t)$ provides a lower bound. 
    
    At this point, we only need to show that $Y(t)$ exists on \([0,T]\). Consider \(y(t)\) such that $\dot{y} = \beta y^2 + q$ with a given initial condition $y(0)=y_0$ (recall the definitions of constants in Theorem~\ref{thm:leader_ODE_system}).
    Under the restrictions on \(T\), \(y(t)\) exists on \([0,T]\), which follows from \cite{papavassilopoulos2003existence}.
    Since \(\|Q_Y\|\leq q\), \(\|S_Y\|\leq \beta\), and \(\|L_T\|\leq y_0\), by \cite[Corollary 6.3]{hale2009ordinary} \(\|Y(T-t)\|\leq y(t),\ \forall t\in[0,T]\).
    The existence of \(Y(t)\) on \([0,T]\) is confirmed, which concludes the proof.
\end{proof}

\section{Strategic Inference with Discrete Observations}\label{sec:appendix_c}
In this appendix, we address the observability of \(\sigma_F\), mentioned in Remark~\ref{rem:discrete_time}.
We show that when the follower's optimal state process is only observed at discrete times \(\Delta:0\leq t_1\leq \cdots\leq t_n\leq T\), the joint MLEs for \(M\) and \(\sigma_F\) align with those in our model with continuous observability, i.e., when \(\lambda(\Delta) := \sup_{0\leq i\leq n-1} |t_{i+1} - t_i| \to 0\) as \(n\to\infty\), the joint MLEs converge to their continuous counterparts.

Without knowledge of \(\sigma_F\), the leader jointly estimates \(M\) and \(\sigma_F\) through MLE based on observations of \(x:=(X^{F,*}_{t_1},\ldots,X^{F,*}_{t_n})\).
Since \(X^{F,*}\) is Markov, the joint log-likelihood can be decomposed as \( \ell(x;M,\sigma_F) = \sum_{i=0}^{n-1}\ell(x_{i+1}|x_i;M,\sigma_F)\), where \(x_0\in\R\) denotes the deterministic initial condition \(X^{F,*}_0\). Define \(\mc{F}(a,b):= \int_a^b f(v)\ud v\), where \(f\) is defined in~\eqref{eqn:opt_state}. 
The solution to the Ornstein-Uhlenbeck process~\eqref{eqn:opt_state} has the following representation:
\begin{equation*}
    X^{F,*}_{t_{i+1}} = X^{F,*}_{t_{i}} e^{\mc{F}(t_i,t_{i+1})} - \frac{B_F^2}{R_F}\int_{t_i}^{t_{i+1}} e^{\mc{F}(u,t_{i+1})}b_u\ud u + \sigma_F \int_{t_i}^{t_{i+1}} e^{\mc{F}(u,t_{i+1})}\ud W^F_u,
\end{equation*}
which results in the following log-likelihood: 
\begin{multline*}
    \ell(x;M,\sigma_F) = -\frac{n}{2}\log(2\pi)-n\log\sigma_F-\frac{1}{2}\sum_{i=0}^{n-1}\log \int_{t_i}^{t_{i+1}}e^{2\mc{F}(u,t_{i+1})}\ud u \\
        - \frac{1}{2\sigma_F^2}\sum_{i=0}^{n-1}\frac{[x_{i+1}-x_ie^{\mc{F}(t_i,t_{i+1})}+\frac{B_F^2}{R_F}\int_{t_i}^{t_{i+1}}e^{\mc{F}(u,t_{i+1})}b_u\ud u]^2}{\int_{t_i}^{t_{i+1}}e^{2\mc{F}(u,t_{i+1})}\ud u}.
\end{multline*}
Setting partial derivatives with respect to \(M,\sigma_F\) to zero yields the joint MLEs
\begin{align*}
    \widehat{M} &= -\frac{\sum_{i=0}^{n-1}\frac{[x_{i+1} - x_ie^{\mc{F}(t_i,t_{i+1})}]\int_{t_i}^{t_{i+1}}e^{\mc{F}(u,t_{i+1})}g_u\ud u}{\int_{t_i}^{t_{i+1}}e^{2\mc{F}(u,t_{i+1})}\ud u}}{\frac{B_F^2}{R_F}\sum_{i=0}^{n-1}\frac{(\int_{t_i}^{t_{i+1}}e^{\mc{F}(u,t_{i+1})}g_u\ud u)^2}{\int_{t_i}^{t_{i+1}}e^{2\mc{F}(u,t_{i+1})}\ud u}},\\
    \widehat{\sigma_F^2} &= \frac{1}{n}\sum_{i=0}^{n-1} \frac{[x_{i+1}-x_ie^{\mc{F}(t_i,t_{i+1})}+\widehat{M}\frac{B_F}{R^2}\int_{t_i}^{t_{i+1}}e^{\mc{F}(u,t_{i+1})}g_u\ud u]^2}{\int_{t_i}^{t_{i+1}}e^{2\mc{F}(u,t_{i+1})}\ud u},
\end{align*}
where \(g_u := g(u,x^L)\) for brevity.

We use the notation \(a_n\sim b_n\) for two sequences of real numbers, when \(a_n/b_n\to 1\) as \(n\to\infty\) and \(\lambda(\Delta)\to 0\).
Setting \(\lambda(\Delta)\to 0\), on the denominator of \(\widehat M\) we have
\begin{equation*}
    \sum_{i=0}^{n-1}\frac{(\int_{t_i}^{t_{i+1}}e^{\mc{F}(u,t_{i+1})}g_u\ud u)^2}{\int_{t_i}^{t_{i+1}}e^{2\mc{F}(u,t_{i+1})}\ud u}\sim  \sum_{i=0}^{n-1}g^2_{t_i}(t_{i+1}-t_i)\to \int_0^T g_t^2\ud t.
\end{equation*}
On the numerator of \(\widehat{M}\),
\begin{multline*}
    \sum_{i=0}^{n-1}\frac{[X^{F,*}_{t_{i+1}} - X^{F,*}_{t_i}e^{\mc{F}(t_i,t_{i+1})}]\int_{t_i}^{t_{i+1}}e^{\mc{F}(u,t_{i+1})}g_u\ud u}{\int_{t_i}^{t_{i+1}}e^{2\mc{F}(u,t_{i+1})}\ud u}\sim
    \sum_{i=0}^{n-1}g_{t_i}(X^{F,*}_{t_{i+1}} - X^{F,*}_{t_i})\\ - \sum_{i=0}^{n-1}g_{t_i}X^{F,*}_{t_i}(e^{\mc{F}(t_i,t_{i+1})} - 1)\overset{p}{\to} \int_0^T g_t\ud X^{F,*}_t - \int_0^T f(t)g_t X^{F,*}_t\ud t,
\end{multline*}
where \(e^{\mc{F}(t_i,t_{i+1})} - 1 = f(t_i)(t_{i+1} - t_i) + o(\lambda(\Delta))\) follows from the Taylor expansion of \(h(x):= e^{\mc{F}(t_i,x)} =  e^{\int_{t_i}^x f(v)\ud v}\), with the properties \(h'(x) = h(x)f(x)\), \(h(t_i) = 1\).
As \(\lambda(\Delta)\to 0\) and \(n\to\infty\), such \(\widehat M\) converges in probability to its counterpart~\eqref{eq:MLE_estimator} under continuous observability.

For the numerator of \(\widehat{\sigma_F^2}\), distribute terms of the square and notice that, due to the presence of \(\frac{1}{n}\), terms that are linear or quadratic in \(t_{i+1}-t_i\) vanish in the limit:
\begin{equation*}
    \widehat{\sigma_F^2}\sim \frac{1}{n}\sum_{i=0}^{n-1} \frac{(X^{F,*}_{t_{i+1}}-X^{F,*}_{t_i}e^{\mc{F}(t_i,t_{i+1})})^2}{t_{i+1} - t_i}\overset{p}{\to} \frac{1}{T}\QV{X^{F,*}}{X^{F,*}}_T \overset{\mathrm{a.s.}}{=} \sigma_F^2.
\end{equation*}
As \(\lambda(\Delta)\to 0\) and \(n\to\infty\), \(\widehat{\sigma_F^2}\) converges in probability to the true underlying \(\sigma_F^2\).

Those observations suggest that, our model with continuous observability can be identified as the limit of the one with discrete observability, which justifies our assumption on the leader's observability of \(\sigma_F\).

\bibliographystyle{siamplain}

\end{document}